\newcommand{\bsmat}{\left[\begin{smallmatrix} }
\newcommand{\esmat}{\end{smallmatrix}\right] }
\newcommand{\tn}{\mathbb{T}_n}
\newcommand{\dtau}{\mathbb{D}_{\tau_n}}
\newcommand{\rnn}{\mathbb{R}^{n\times n}}
\newcommand{\cnn}{\mathbb{C}^{n\times n}}
\newcommand{\on}{\mathbb{O}_n}
\newcommand{\un}{\mathbb{U}_n}
\newcommand{\sn}{\mathbb{S}_n}
\newcommand{\hn}{\mathbb{H}_n}
\newcommand{\grn}{\mathbb{GL}(n,\mathbb{R})}
\newcommand{\gcn}{\mathbb{GL}(n,\mathbb{C})}
\newcommand{\an}{\mathbb{A}_n}
\newcommand{\wn}{\mathbb{W}_n}
\newcommand{\PI}{PI}
\DeclareMathOperator{\T}{T}
\DeclareMathOperator{\HH}{H}
\DeclareMathOperator{\diag}{diag}
\DeclareMathOperator{\card}{card}
\DeclareMathOperator{\rank}{rank}
\DeclareMathOperator{\subspan}{span}
\DeclareMathOperator{\Bdiag}{Bdiag}
\DeclareMathOperator{\OffBdiag}{OffBdiag}
\DeclareMathOperator{\subspace}{subspace}
\newtheorem{theorem}{Theorem}[section]
\newtheorem{lemma}{Lemma}[section]
\newtheorem{definition}{Definition}[section]
\newtheorem{remark}{{\sc Remark}}[section]
\newtheorem{example}{Example}[section]
\numberwithin{equation}{section}
\title{Solving General Joint Block Diagonalization Problem via  Linearly Independent Eigenvectors of a  Matrix Polynomial
\thanks{This research was supported by NSFC under grants 11671023, 11421101 and 11301013.}}
\author{
Yunfeng Cai\thanks{
LMAM \& School of Mathematical Sciences,
Peking University, Beijing, 100871, China,
{\tt yfcai@math.pku.edu.cn}
} \quad
Guanghui Cheng\thanks{
School of Mathematical Sciences, University of Electronic Science and Technology of China, Chengdu, Sichuan 611731, P. R. China,
{\tt ghcheng@uestc.edu.cn}}
\quad
Decai Shi\thanks{
Beijing International Center for Mathematical Research, Peking University, Beijing, 100871, P. R. China,
{\tt decaishi@gmail.com}
}
}
\date{\today}
\begin{document}

\maketitle

\begin{abstract}
In this paper, we consider the exact/approximate general joint block diagonalization (GJBD) problem of
a matrix set $\{A_i\}_{i=0}^p$ ($p\ge 1$), where a nonsingular matrix $W$ (often referred to as diagonalizer) needs to be found such that
the matrices $W^{\HH}A_iW$'s are all exactly/approximately block diagonal matrices
with as many diagonal blocks as possible.
We show that the diagonalizer of the exact GJBD problem can be given by $W=[x_1, x_2, \dots, x_n]\Pi$,
where $\Pi$ is a permutation matrix,
$x_i$'s are eigenvectors of the matrix polynomial $P(\lambda)=\sum_{i=0}^p\lambda^i A_i$,
satisfying that $[x_1, x_2, \dots, x_n]$ is nonsingular,
and the  geometric multiplicity of each $\lambda_i$ corresponding with $x_i$ equals one.
And the equivalence of all solutions to the exact GJBD problem is established.
Moreover, theoretical proof is given to show why the approximate GJBD problem can be solved similarly to the exact GJBD problem.
Based on the theoretical results, a three-stage method is proposed and numerical results show the merits of the method.

\vskip 2mm

\noindent {\bf Key words.}
general joint block diagonalization, matrix polynomial, tensor decomposition.

\vskip2mm

\noindent {\bf MSC.}  15A69, 65F15.
\end{abstract}

\section{Introduction}\label{sec:intro}
The problem of joint block diagonalization of matrices (also called simultaneously block diagonalization problem),
is a particular decomposition of a third order tensor in block terms  \cite{de2008decompositions}, \cite{de2008decompositions2}, \cite{de2008decompositions3}, \cite{nion2011tensor}.
Over the past two decades, such a decomposition has found many applications
in independent component analysis (e.g., \cite{cardoso1998multidimensional}, \cite{de2000fetal},
\cite{theis2005blind}, \cite{theis2006towards}) and semidefinite programming (e.g., \cite{bai2009exploiting}, \cite{de2007reduction}, \cite{de2010exploiting}, \cite{gatermann2004symmetry}).
For example, in blind source separation (BSS),
people aim to recover source signals from the observed mixtures,
without knowing either the distribution of the sources or the mixing process \cite{choi2005blind}, \cite{comon2010handbook}, \cite{hyvarinen2004independent}.
Different assumptions on the source signals lead to different models and methods.
Typically, there are three cases:
first, the source signals are mutually statistically  independent,
the mixing system can be estimated by joint diagonalization (JD),
e.g., JADE \cite{cardoso1993blind}, eJADE \cite{moreau2001generalization}, SOBI \cite{belouchrani1997blind} and Hessian ICA \cite{theis2004new}, \cite{yeredor2000blind};
second, there are several groups of signals,
in which components from different groups are mutually statistically independent
and statistical dependence occurs between components in the same group
(known as multidimensional BSS or group BSS),
the mixing system can be estimated by joint block diagonalization (JBD) \cite{cardoso1998multidimensional}, \cite{theis2005blind};
third, the number of groups and the size of each group are unknown,
the mixing system can be estimated by general joint block diagonalization (GJBD).

To proceed, in what follows we introduce some definitions and notations, then formulate the JBD problem and the GJBD problem mathematically.
\begin{definition}
We call $\tau_n=(n_1,n_2,\dots,n_t)$ a {\em partition} of positive integer $n$ if
$n_1,n_2,\dots,n_t$ are all positive integers and their sum is $n$, i.e., $\sum_{i=1}^t n_i=n$.
The integer $t$ is called the {\em cardinality} of the partition $\tau_n$,
denoted by $\card(\tau_n)$.
The set of all partitions of $n$ is denoted by $\tn$.
\end{definition}

\begin{definition}
Given a partition $\tau_n=(n_1,n_2,\dots,n_t)\in\tn$,
for any $n$-by-$n$ matrix $A$,
define its block diagonal part and off-block diagonal part associated with $\tau_n$ as
  \begin{align*}
    \Bdiag_{\tau_n}(A)=\diag(A_{11},A_{22},\dots,A_{tt}),\quad   \OffBdiag_{\tau_n}(A)=A-\Bdiag_{\tau_n}(A),
\end{align*}
respectively, where $A_{ii}$ is $n_i$-by-$n_i$ for $i=1,2,\dots, t$.
A matrix $A$ is referred to as a $\tau_n$-block diagonal matrix if $\OffBdiag_{\tau_n}(A)=0$. The set of all $\tau_n$-block diagonal matrices is denoted by $\dtau$.
\end{definition}

Let $\sn$, $\hn$, $\on$, $\un$, $\grn$ and $\gcn$
denote the $n\times n$ matrix set of real symmetric matrix, complex Hermitian matrix, real orthogonal matrix,
complex unitary matrix, real nonsingular matrix and complex nonsingular matrix, respectively.
Let $\an=\sn$, $\hn$, $\rnn$, or $\cnn$,
$\wn=\on$, $\un$, $\grn$, or $\gcn$.
Then the JBD problem and the GJBD problem  can be formulated as follows.

\noindent\textbf{The JBD problem.} \quad Given a matrix set $\mathcal{A}=\{A_i\}_{i=0}^p$
with $A_i\in\an$,
and a partition $\tau_n=(n_1,n_2,\dots,n_t)\in\tn$.
Find a matrix $W=W(\tau_n)\in\wn$ such that $W^{\star} A_i W\in\dtau$ for $i=0,1,\dots, p$,
i.e.,
\begin{equation}\label{eq:nujbd}
W^{\star} A_i W=\diag(A_{i}^{(11)},A_i^{(22)},\dots, A_{i}^{(tt)}), \quad \mbox{for}\quad i=0,1,\dots, p,
\end{equation}
where $A_{i}^{(jj)}$ is $n_j$-by-$n_j$ for $j=1,2,\dots,t$,
the symbol $(\cdot)^{\star}$ stands for the transpose of a real matrix or the conjugate transpose of a complex matrix.

%
%

\noindent\textbf{The GJBD problem.}\quad
Given a matrix set $\mathcal{A}=\{A_i\}_{i=0}^p$
with $A_i\in\an$.
Find a partition $\tau_n'=(n_1',n_2',\dots,n_t')\in\tn$ and a matrix $W=W(\tau_n')\in\wn$
such that
\begin{equation*}
\card(\tau_n')=\max\{\card(\tau_n) \, \big|\,
\mbox{there exists a $W=W(\tau_n)$ which solves JBD}\}.
\end{equation*}

The transformation matrix $W$ is often referred to as a {\em diagonalizer}.
The (G)JBD problem is called symmetric/Hermitian (G)JBD  if $\an=\sn/\hn$;
exact/approximate (G)JBD  if \eqref{eq:nujbd} is satisfied exactly/approximately;
orthogonal/ non-orthogonal (G)JBD  if $\an=\rnn$ and $\wn=\on/\grn$;
similarly, 
unitary/non-unitary (G)JBD  if $\an=\cnn$ and $\wn=\un/\gcn$.

In practical applications,
the matrices $A_i$'s are usually constructed from empirical data,
as a result, the exact JBD problem has no solutions
and the exact GJBD problem has only trivial solution $((n), I_n)$.
Consequently, the approximate (G)JBD problem is considered instead.
For the JBD problem, it is  natural to formulate it as a constrained optimization problem $C(W)=\min$,
where $C(\cdot)$ is a cost function used to measure the off-block diagonal parts of $A_i$'s,
$W$ is a diagonalizer in certain feasible set.
Different cost functions and feasible sets  together with various optimization methods lead to many numerical methods.
Since this paper mainly concentrates on algebraic methods,
we will not list the detailed literature on the optimization methods,
we refer the readers to  \cite{chabriel2014joint}, \cite{de2009survey}, \cite{tichavsky2017non} and references therein.
For the GJBD problem, one needs to minimize the off-block diagonal parts of $A_i$'s
and maximize the number of diagonal blocks simultaneously,
it is difficult to formulate it as a simple optimization problem that can be easily solved.
By assuming that the GJBD problem shares the same local minima with the JD problem, the GJBD problem 
is simply solved with a JD algorithm,
followed by a permutation, which is used to reveal the block structure  \cite{abed2004algorithms}, \cite{theis2006towards}.

Without good initial guesses, optimization methods may suffer from slow convergence,
or converge to degenerate solutions.
Algebraic methods, on the other hand, are able to find a solution in finite steps
with predictable computational costs.
And even if the solutions returned by algebraic methods  are ``low quality'',
they are usually good initial guesses for optimization methods.
In current literature, the algebraic methods for the GJBD problem fall into two categories:
one is based on matrix $\ast$-algebra
(see  \cite{de2011numerical}, \cite{maehara2010numerical}, \cite{maehara2011algorithm},  \cite{murota2010numerical}, for the orthogonal GJBD problem and a recent generation \cite{cai2017algebraic} for the non-orthogonal GJBD problem),
the other is based on matrix polynomial (see \cite{cai2015matrix} for the Hermitian GJBD problem).
In the former category,
a null space of a linear operator needs to be computed,
which requires $\mathcal{O}(n^6)$ flops,
thus, for problems with large $n$,
such an approach will be quite expensive for both storage and computation.
In this paper, we will focus on the latter category,
which we will show later that it only requires $\mathcal{O}(n^3)$ flops.

As the results in this paper is an extension of those in \cite{cai2015matrix},
in what follows,  we summarize some related results therein.
For a Hermitian matrix set $\{A_i\}_{i=0}^p$,
the corresponding matrix polynomial is constructed as
${P}_{\mathcal{A}}(\lambda)=\sum_{i=0}^p\lambda^i A_i$.
Assuming that ${P}_{\mathcal{A}}(\lambda)$ is regular and has only simple eigenvalues,
and using the spectral decomposition of the Hermitian matrix polynomial, theoretically,
it is shown that the column vectors of the diagonalizer of the exact Hermitian GJBD problem of $\{A_i\}_{i=0}^p$ can be given by 
$n$ linearly independent eigenvectors (in a certain order) of ${P}_{\mathcal{A}}(\lambda)$ (\cite[Corollary 3.5]{cai2015matrix});
all solutions to the Hermitian GJBD problem are equivalent, i.e.,
all solutions are unique up to block permutations and block diagonal transformations (\cite[Theorem 3.8]{cai2015matrix}).
Therefore, one can solve the Hermitian GJBD problem by finding $n$ linearly independent eigenvectors $x_1, x_2, \dots, x_n$ of ${P}_{\mathcal{A}}(\lambda)$,
followed by determining a permutation $\Pi$ via revealing the block diagonal structure of $\Pi^{\T}[x_1, \dots, x_n]^{\HH}A_i [x_1, \dots, x_n]\Pi$ (MPSA-II).
Numerically, it is shown that MPSA-II,
though designed to solve the exact Hermitian GJBD problem,
is able to deal with the approximate Hermitian GJBD problem to some extent. 
However, the approach in \cite{cai2015matrix} suffers from the following three disadvantages:
first, the proofs are difficult to follow
if the readers are unfamiliar with the spectral decomposition of a matrix polynomial;
second, there is no theoretical proof to show why the approach is applicable for the approximate Hermitian GJBD problem;
third,
the approach can not be used to solve general ($A_i$'s are not necessarily Hermitian) GJBD problem directly.
\footnote{
By constructing a Hermitian matrix polynomial
$\widehat{P}_{\mathcal{A}}(\lambda)\triangleq\lambda^{2p+1}(A_p+A_p^{\HH})+
\lambda^{2p} \imath (A_p-A_p^{\HH})+\dots+\lambda (A_0+A_0^{\HH})+\imath (A_0-A_0^{\HH})$,
one can still follow the approach in \cite{cai2015matrix} to solve the general GJBD problem of $\{A_i\}_{i=0}^p$,
but the degree of $\widehat{P}_{\mathcal{A}}(\lambda)$ is $2p+1$,
almost twice as many as the degree of ${P}_{\mathcal{A}}(\lambda)$.}
In this paper, we try to give a remedy.
For a matrix set $\mathcal{A}=\{A_i\}_{i=0}^p$,
we still construct the matrix polynomial as $P_{\mathcal{A}}(\lambda)=\sum_{i=0}^p\lambda^i A_i$.
Let $x_1, x_2, \dots, x_n$ be $n$ linearly independent eigenvectors of ${P}_{\mathcal{A}}(\lambda)$.
Under the assumption that the geometric multiplicities of the corresponding eigenvalues equal one
(much weaker than that ``${P}_{\mathcal{A}}(\lambda)$ is regular and all eigenvalues are simple''), we show that
the diagonalizer of
the exact GJBD problem can be written as $X\Pi=[x_1, x_2, \dots, x_n]\Pi$,
where $\Pi$ is a permutation matrix;
all solutions to the exact GJBD problem are equivalent.
The proofs of these results are easy to follow,
without using the spectral decomposition of a matrix polynomial.
Furthermore, using perturbation theory,
we  give a theoretical proof for using $X\Pi$ as the diagonalizer for the approximate GJBD problem.
Lastly, a three-stage method, which is modified from MPAS-II in \cite{cai2015matrix}, is proposed.
Numerical examples show that the proposed method is effective and efficient.

The rest of the paper is organized as follows.
In section~\ref{sec:pre}, we give some preliminary results on matrix polynomials and motivations for using a matrix polynomial to solve the GJBD problem.
In section~\ref{sec:theory}, the main results are presented.
Numerical method and numerical examples are given in sections~\ref{sec:method} and \ref{sec:eg}, respectively.
Finally, we present some concluding remarks in section~\ref{sec:conclusion}.

\bigskip

\noindent{\bf Notations.}\qquad
The imaginary unit $\sqrt{-1}$ is denoted by $\imath$.
For a matrix $A=[a_{ij}]$, $|A|$, $\|A\|_2$ and  $\|A\|_F$ denote $[|a_{ij}|]$, the 2-norm and Frobinius norm, respectively.
The eigenvalue sets of a square matrix $A$ and a matrix polynomial $P(\lambda)$ are denoted by $\lambda(A)$ and $\lambda(P)$, respectively.
The MATLAB convention is adopted to access the entries of vectors and matrices.
The set of integers from $i$ to $j$ inclusive is $i : j$.
For a matrix $A$, its submatrices $A{(k:\ell,i:j)}$, $A{(k:\ell,:)}$, $A{(:,i:j)}$ consist of intersections of
row $k$ to row $\ell$ and column $i$ to column $j$,
row $k$ to row $\ell$ and all columns,
all rows and column $i$ to column $j$, respectively.

\section{Preliminary and motivation}\label{sec:pre}

A matrix polynomial of degree $p$ is defined as
\begin{align}\label{mpp}
P(\lambda)\triangleq\lambda^p A_p+\lambda^{p-1}A_{p-1}+\dots+A_0,
\end{align}
where the coefficient matrices $A_i$'s are all $n$-by-$n$ matrices and $A_p\ne 0$.
A scalar $\lambda$ is called an eigenvalue of $P(\lambda)$ if
$\det(P(\lambda))=0$.
A nonzero vector is called the corresponding eigenvector if $P(\lambda)x=0$.
Such $\lambda$ together with $x$ are called an eigenpair of $P(\lambda)$,
denoted by $(\lambda,x)$.

The polynomial eigenvalue problem (PEP) $P(\lambda)x=0$ is equivalent to a generalized
eigenvalue problem (GEP) $(\lambda M + N)u=0$,
where
\begin{align}\label{uxl}
u=u(x,\lambda)&\triangleq [x^{\T}, \lambda x^{\T}, \dots, \lambda^{p-1}x^{\T}]^{\T},
\end{align}
$L$ and $M$ are some $np$-by-$np$ matrices.
Such a transformation is called {\em linearization}.
Linearizations are not unique, among which, the commonly used one can be given by
\begin{align}\label{mn}
M=\begin{bmatrix}
I& 0 &0&\cdots&0\\
0&I & 0 &\cdots&0\\
\vdots&\ddots&\ddots&\ddots&\vdots\\
0 & \cdots & 0& I& 0\\
0 & \cdots & 0 &0 & A_p
\end{bmatrix},\quad
N=\begin{bmatrix}
0&-I&0&\cdots&0\\
0&0&-I&\ddots&\vdots\\
\vdots&\vdots&\ddots&\ddots&0\\
0 & 0& \cdots& 0 & -I\\
A_0& A_1 &\cdots& A_{p-2}& A_{p-1}
\end{bmatrix}.
\end{align}
For more linearizations and some structure preserving linearizations for structured matrix polynomials, we refer the readers to \cite{higham2006symmetric}, \cite{mackey2006vector}, \cite{mehrmann2002polynomial}.
The eigenvalues and eigenvectors of PEP can be obtained via those of GEP, vice versa.

\bigskip

At first glance, it seems that the matrix polynomial is not related to the GJBD problem at all. But in fact, they are closely related.
Let us consider the GJBD problem of three matrices $\{A_0,A_1,A_2\}$ with
\begin{align*}
A_0=\begin{bmatrix}  7 &8 & 9\\ 4 & -12 & -8\\ 5 & -4 & 7\end{bmatrix},\quad
A_1=\begin{bmatrix} -8 & 8 & 8\\ -4 & 4  & 0\\ -4 & 12 & 0 \end{bmatrix},\quad
A_2=\begin{bmatrix} 5 & 0 & 3\\ -8 & 4 & -4\\ -5 & 4 & 1 \end{bmatrix}.
\end{align*}
The eigenvector matrix $X$ and the eigenvalue matrix $T$ of the PEP
$ (\lambda^2 A_2 +\lambda A_1 +A_0)x=0$ can be given by
\begin{align*}
X&=[x_1, x_2, \dots, x_6]\\
&=\begin{bmatrix}
  -0.1690 & -0.5774 &  0.3981 + 0.4094\imath &  0.3981 - 0.4094\imath & -0.5774   & 0.4904 \\
  -0.9710& -0.5774  & 0.1108 + 0.5792\imath  & 0.1108 - 0.5792\imath & -0.5774 & -0.7205\\
  -0.1690&  0.5774&  0.3981 + 0.4094\imath   &0.3981 - 0.4094\imath  & 0.5774 &   0.4904\\
\end{bmatrix},\\
T&=\diag(\lambda_1,\lambda_2,\dots,\lambda_6)\\
&=\diag( -3.5830, 3.0000,  -0.5283 + 1.3793\imath,  -0.5283 - 1.3793\imath,1.0000, 0.6396).
\end{align*}
Let $W=X{(:,[2, 1, 6])}$, then
\begin{align*}
W^{\T} A_0 W&=\begin{bmatrix}
 4.0000 &  -0.0000  &  0.0000\\
   -0.0000 & -10.5142  & -8.3243\\
    0.0000  & -13.1077 &   0.5034\end{bmatrix},\\
W^{\T} A_1 W&=  \begin{bmatrix} -5.3333 &  -0.0000  & -0.0000\\
    0.0000  &  6.2833    &7.4702\\
   -0.0000  & -6.8800  & -4.5381\end{bmatrix},\\
W^{\T} A_2 W&=\begin{bmatrix}
    1.3333  & -0.0000  &  0.0000\\
    0.0000   & 2.5726   & 8.6677\\
    0.0000 &  -0.8992   & 5.8646\end{bmatrix}.
\end{align*}
By calculations, we can show that the $2$-by-$2$ blocks in $W^{\T}A_0W$, $W^{\T}A_1W$ and $W^{\T}A_2W$ can not be simultaneously diagonalized.
Therefore, $((1,2),W)$ is a solution to the GJBD problem of $\{A_0, A_1, A_2\}$.
This example shows that the  GJBD problem can be indeed solved via linearly independent eigenvectors of
a matrix polynomial, in the proceeding sections, we will present the theoretical proofs.

\bigskip

Given a matrix set $\mathcal{A}=\{A_i\}_{i=0}^p$, the matrix polynomial $P_{\mathcal{A}}(\lambda)$
is constructed in a particular order of the matrices in $\mathcal{A}$.
However, the matrices are not ordered in any way for the GJBD problem of $\mathcal{A}$.
Later, we will see that the results in this paper do not
depend on such an order.
%
%
So it suffices to show the results for only one matrix polynomial, say $P_{\mathcal{A}}(\lambda)$.

\section{Main results}\label{sec:theory}

In this section, we give our main results for the exact and approximate GJBD problems in subsections~\ref{subsec:exact} and \ref{subsec:approx}, respectively.

\subsection{On exact GJBD problem}\label{subsec:exact}
In this subsection, we first characterize the diagonalizer $W$,
then show the equivalence of the solutions.
\begin{theorem}\label{thm1}
Given a matrix set $\mathcal{A}=\{A_i\}_{i=0}^p$.
Let $X=[x_1,x_2,\dots, x_n]$, $T=\diag(\lambda_1,\lambda_2,\dots, \lambda_n)$,
where $(\lambda_j,x_j)$ for $j=1,2,\dots, n$ are $n$ eigenpairs of  $P_{\mathcal{A}}(\lambda)=\sum_{i=0}^{p}\lambda^i A_i$.
Assume that $X$ is nonsingular and the  geometric multiplicities of $\lambda_1,\lambda_2,\dots, \lambda_n$ all equal one.
If $(\tau_n,W)$ solves the GJBD problem,
then there exist a permutation matrix $\Pi$ and
a nonsingular matrix $D\in\dtau$ such that $WD=X\Pi$,
i.e., $({\tau}_n,X\Pi)$  also solves the GJBD problem.
\end{theorem}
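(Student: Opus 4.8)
The plan is to push the block-diagonalizing transformation through to the matrix polynomial itself and then read off the block structure of the transformed eigenvectors. First I would form $\tilde P(\lambda) = W^{\star} P_{\mathcal A}(\lambda) W = \sum_{i=0}^p \lambda^i (W^{\star} A_i W)$. Since $(\tau_n,W)$ solves the GJBD problem, each $W^{\star} A_i W$ lies in $\dtau$, so $\tilde P(\lambda) = \diag(Q_1(\lambda), \dots, Q_t(\lambda))$ is a block-diagonal matrix polynomial whose $k$-th diagonal block $Q_k(\lambda)$ is $n_k$-by-$n_k$, where $\tau_n = (n_1, \dots, n_t)$.

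Next I would transport the eigenvectors. Because $W$ is nonsingular, $P_{\mathcal A}(\lambda_j) x_j = 0$ is equivalent to $\tilde P(\lambda_j)(W^{-1}x_j) = 0$, so $\tilde x_j := W^{-1} x_j$ is an eigenvector of $\tilde P$ for $\lambda_j$, and moreover $\ker \tilde P(\lambda_j) = W^{-1}\ker P_{\mathcal A}(\lambda_j)$, whence $\dim\ker\tilde P(\lambda_j) = \dim\ker P_{\mathcal A}(\lambda_j) = 1$ by the geometric-multiplicity hypothesis. Since $\tilde P$ is block diagonal, its kernel splits as $\ker\tilde P(\lambda_j) = \bigoplus_{k=1}^t \ker Q_k(\lambda_j)$, so $\sum_k \dim\ker Q_k(\lambda_j) = 1$. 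Hence there is a unique index $k(j)$ with $\dim\ker Q_{k(j)}(\lambda_j) = 1$ and $\ker Q_k(\lambda_j) = \{0\}$ for $k \ne k(j)$; writing $\tilde x_j$ in block form then shows that $\tilde x_j$ is supported entirely in block-row $k(j)$.

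Finally I would run a counting argument on $\tilde X := W^{-1} X = [\tilde x_1, \dots, \tilde x_n]$. Grouping the column indices by block, $S_k = \{j : k(j) = k\}$, I would let $\Pi$ be a permutation that reorders the columns so those in $S_1$ come first, then $S_2$, and so on. Because each column in $S_k$ vanishes outside block-row $k$, the reordered matrix $\tilde X \Pi$ is block diagonal with respect to the row partition $\tau_n$ and the column partition $(|S_1|, \dots, |S_t|)$. Since $\tilde X$ is nonsingular (both $W$ and $X$ are), $\tilde X \Pi$ is nonsingular, which forces every diagonal block to be square and invertible: the columns in $S_k$ live in the $n_k$-dimensional block-row $k$, so $|S_k| > n_k$ would make them dependent, and the identity $\sum_k |S_k| = n = \sum_k n_k$ then forces $|S_k| = n_k$ for every $k$. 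Therefore $D := \tilde X \Pi = W^{-1} X \Pi$ is a nonsingular matrix in $\dtau$, giving $WD = X\Pi$; and since $D$ is block diagonal, $(X\Pi)^{\star} A_i (X\Pi) = D^{\star}(W^{\star} A_i W) D \in \dtau$, so $(\tau_n, X\Pi)$ solves the GJBD problem as well.

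The hard part is the step that pins each transformed eigenvector to a single block, which rests on the preservation of geometric multiplicity under $W^{\star}(\cdot)W$ together with the splitting of the kernel of a block-diagonal matrix polynomial into the direct sum of the kernels of its blocks. This is exactly where the hypothesis \emph{geometric multiplicity equals one} is consumed: without it a transformed eigenvector could mix several blocks and the clean block-diagonal plus permutation conclusion would break down. The subsequent counting argument is routine but must be stated carefully to rule out $|S_k| \ne n_k$.
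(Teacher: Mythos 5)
Your proposal is correct and takes essentially the same route as the paper's own proof: both pass to $Y=W^{-1}X$ as eigenvectors of the block-diagonal polynomial $P_{\mathcal{D}}(\lambda)=\sum_i\lambda^i(W^{\star}A_iW)$, use the geometric-multiplicity-one hypothesis (via the kernel of a block-diagonal polynomial splitting across its blocks) to pin each transformed eigenvector to a single block row, and then run the same counting argument on the nonsingularity of $W^{-1}X\Pi$ to force the column-group sizes to equal $(n_1,\dots,n_t)$, yielding $D=W^{-1}X\Pi\in\dtau$. The only cosmetic difference is that the paper phrases the block-pinning step through the eigenvalue-set decomposition $\lambda(P_{\mathcal{D}})=\cup_k\lambda(P_{\mathcal{A}_k})$ and the disjointness $\lambda(T_j)\cap\lambda(P_{\mathcal{A}_k})=\emptyset$ for $j\ne k$, and the counting through a rank inequality, but these are the same ideas you use.
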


\begin{proof}
As $(\tau_n,W)$ is a solution to the GJBD problem,
we have
\begin{align}\label{eqB}
D_{i}\triangleq W^{\HH}A_iW=\diag(A_{i}^{(11)},\dots,A_{i}^{(tt)})\in\dtau \mbox{ for } i=0,1,\dots,p.
\end{align}
Since $(\lambda_j, x_j)$ for $j=1,2,\dots,n$ are eigenpairs of $P_{\mathcal{A}}(\lambda)$,
we also have
\begin{align}\label{matpolyeq}
A_p XT^{p} +A_{p-1}XT^{p-1}+\dots + A_0 X=0.
\end{align}
Pre-multiplying  \eqref{matpolyeq} by $W^{\HH}$ and using \eqref{eqB}, we get
\begin{align}\label{dyt}
D_{p} W^{-1}XT^{p}+D_{p-1}W^{-1}XT^{p-1}+\dots +D_0W^{-1}X=0.
\end{align}
Let $Y=W^{-1}X=[y_1,y_2,\dots, y_n]$, then
$(\lambda_j,y_j)$ for $j=1,2,\dots, n$ are eigenpairs of
$P_{\mathcal{D}}(\lambda)=\sum_{i=0}^p \lambda^i D_i$,
and the geometric multiplicities of $\lambda_j$'s, as eigenvalues of $P(\mathcal{D})$,
all equal one.
Denote $\mathcal{A}_j=\{A_i^{(jj)}\}_{i=0}^p$,
we know that for each $\lambda_j$, it belongs to a unique $\lambda({P}_{\mathcal{A}_k})$
since $\lambda(P_{\mathcal{D}})=\cup_{j=1}^t\lambda({P}_{\mathcal{A}_j})$,
and the geometric multiplicities of $\lambda_j$'s all equal one.
Let $n_1',n_2',\dots, n_t'$ be the numbers of $\lambda_j$'s in $\lambda({P}_{\mathcal{A}_1}),\lambda({P}_{\mathcal{A}_2}),\dots,\lambda({P}_{\mathcal{A}_t})$,
respectively.
Then there exists a permutation matrix $\Pi$ such that
\[
W^{-1}X\Pi=\bsmat {Y}_{11} & \dots &{Y}_{1t}\\
\vdots& \ddots &\vdots\\
{Y}_{t1} &\dots & {Y}_{tt}\esmat,\quad
\Pi^{\T}T\Pi=\diag({T}_1,\dots,{T}_t),
\]
where ${Y}_{jk}\in\mathbb{C}^{n_j\times n_k'}$
for $1\le j,k\le t$, $T_j\in\mathbb{C}^{n_j' \times n_j'}$, $\lambda(T_j)\subset \lambda({P}_{\mathcal{A}_j})$ for $j=1,2,\dots,t$.
The assumption that the  geometric multiplicities of $\lambda_1,\lambda_2,\dots, \lambda_n$ all equal one implies that
$\lambda(T_j)\cap\lambda(\mathcal{P}_{\mathcal{A}_k})=\emptyset$ for $j\ne k$, therefore,
by \eqref{dyt}, we have $Y_{jk}=0$ for $j\ne k$.
Thus, for any $1\le k\le t$, it holds that
\begin{align*}
n&=\rank(X\Pi)=\rank(Y_{kk})+\rank(\diag(Y_{11},\dots,Y_{k-1,k-1},Y_{k+1,k+1},\dots,Y_{tt}))\\
&\le\min\{n_k,n_k'\}+\min\{n-n_k,n-n_k'\}\\
&=\min\{n_k,n_k'\}+n-\max\{n_k,n_k'\}\le n.
\end{align*}
Then it follows that $\min\{n_k,n_k'\}=\max\{n_k,n_k'\}$, and hence $n_k=n_k'$.
Thus, 
$D\triangleq W^{-1}X\Pi\in\dtau$ is a nonsingular $\tau_n$-block diagonal matrix.
The conclusion follows.
\end{proof}

Based on Theorem~\ref{thm1}, we can solve the exact GJBD problem by finding $n$ linearly independent eigenvectors $X$ of $P_{\mathcal{A}}(\lambda)$,
then determining a permutation $\Pi$ by revealing the block structure of $\Pi^{\T}X^{\HH} A_i X \Pi$.
We will discuss the details in section~\ref{sec:method}.

\bigskip

Let $\tau_n=(n_1,n_2,\dots,n_t)\in\tn$, $(\tau_n, W)$ be a solution to the GJBD problem,
then $(\tau_n \Pi_t, W D \Pi)$ also solves the GJBD problem,
where $\Pi_t\in\mathbb{R}^{t\times t}$ is a permutation matrix,
$\Pi\in\rnn$ is a  block permutation matrix corresponding  with $\Pi_t$,
which can be obtained by replacing the ``1'' and ``0'' elements in $i$th row of $\Pi_t$
by a permutation matrix of order $n_i$ and zero matrices of right sizes, respectively,
$D\in\dtau$ is nonsingular.
For two solutions $(\tau_n, W)$, $(\hat{\tau}_n,\widehat{W})$ to the GJBD problem,
we say that $(\hat{\tau}_n,\widehat{W})$ is {\em equivalent} to $(\tau_n, W)$
if there exist a permutation matrix $\Pi_t$ and a nonsingular matrix $D\in\dtau$ such that
$(\hat{\tau}_n,\widehat{W})=(\tau_n \Pi_t, W D \Pi)$,
where $\Pi$ is the block permutation matrix corresponding  with $\Pi_t$.

Next, we show that all solutions to the GJBD problem are equivalent, under mild conditions.

\begin{theorem}\label{thm2}
Given a matrix set $\mathcal{A}=\{A_i\}_{i=0}^p$.
Let $\mathcal{B}=\{B_i\}_{i=0}^q$ be a matrix set such that $\subspan\{B_0,B_1,\dots,B_q\}=\subspan\{A_0,A_1,\dots,A_p\}$,
\footnote{The space spanned by several matrices is defined as $\subspan\{A_0,A_1,\dots,A_p\}=\{\sum_{i=0}^p \alpha_i A_i\; | \; [\alpha_0,\alpha_1,\dots,\alpha_p]^{\T}\in\mathbb{C}^{p+1}\}$.}
$P_{\mathcal{B}}(\lambda)=\sum_{i=0}^q \lambda^i B_i$.
If $P_{\mathcal{B}}(\lambda)$ has $n$ eigenpairs $(\lambda_j,x_j)$ for $j=1,2,\dots, n$  such that
$X=[x_1,x_2,\dots, x_n]$ is nonsingular and the geometric multiplicities of $\lambda_1,\lambda_2,\dots, \lambda_n$ all equal  one,
then all solutions to the GJBD problem of $\mathcal{A}$ are equivalent.
\end{theorem}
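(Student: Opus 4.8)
The plan is to reduce the whole problem to the single matrix $X$ supplied by $P_{\mathcal{B}}(\lambda)$ and to Theorem~\ref{thm1}, and then to argue that the maximal-cardinality partition is intrinsic. First I would establish \emph{span invariance}: since $\OffBdiag_{\tau_n}(\cdot)$ is linear in its matrix argument, $W^{\HH}A_iW\in\dtau$ for all $i$ holds if and only if $W^{\HH}CW\in\dtau$ for every $C\in\subspan\{A_0,\dots,A_p\}$. Because $\subspan\{B_0,\dots,B_q\}=\subspan\{A_0,\dots,A_p\}$, this is in turn equivalent to $W^{\HH}B_jW\in\dtau$ for all $j$. Hence $(\tau_n,W)$ solves the GJBD problem of $\mathcal{A}$ if and only if it solves the GJBD problem of $\mathcal{B}$; the two problems have identical solution sets, and in particular the same maximal cardinality. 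This step is what lets the eigenvector hypotheses, which are imposed on $P_{\mathcal{B}}$, be used even though the problem is posed for $\mathcal{A}$.

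Since $P_{\mathcal{B}}(\lambda)$ meets the assumptions of Theorem~\ref{thm1}, I would apply that theorem to $\mathcal{B}$: every solution $(\tau_n,W)$ satisfies $WD=X\Pi$ for some permutation $\Pi$ and nonsingular $D\in\dtau$, so $W=X\Pi D^{-1}$ and every diagonalizer factors through the fixed matrix $X$. Writing $G_i\triangleq X^{\HH}A_iX$, the requirement that $(\tau_n,X\Pi D^{-1})$ be a solution collapses, because conjugation by the block-diagonal $D^{-1}$ preserves $\dtau$, to the purely combinatorial condition $\Pi^{\T}G_i\Pi\in\dtau$ for all $i$.

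The heart of the argument is the uniqueness of the maximal partition up to a block permutation. I would introduce the graph on $\{1,\dots,n\}$ whose edges join indices $a,b$ for which some $G_i$ has a nonzero $(a,b)$ or $(b,a)$ entry; a permutation $\Pi$ renders every $\Pi^{\T}G_i\Pi$ block diagonal exactly when its blocks are unions of connected components, so the maximal cardinality equals the number of connected components and is attained precisely when each block is a single component. Consequently, for two solutions $(\tau_n,W)$ and $(\hat{\tau}_n,\widehat{W})$ the associated permutations $\Pi,\widehat\Pi$ both regroup the columns of $X$ into these same components, whence $\Sigma\triangleq\Pi^{\T}\widehat\Pi$ is a block permutation carrying the $\tau_n$-structure onto the $\hat{\tau}_n$-structure; that is, $\Sigma$ is the block permutation $\Pi'$ attached to some $t\times t$ permutation $\Pi_t$ with $\hat{\tau}_n=\tau_n\Pi_t$.

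It then remains to unwind the algebra. From $W=X\Pi D^{-1}$ and $\widehat{W}=X\widehat\Pi\widehat D^{-1}$ I obtain $\widehat{W}=WD\Sigma\widehat D^{-1}=WD\Pi'\widehat D^{-1}$. Setting $D'\triangleq D\bigl(\Pi'\widehat D^{-1}(\Pi')^{\T}\bigr)$ and using that conjugating the $\hat{\tau}_n$-block-diagonal matrix $\widehat D^{-1}$ by the block permutation $\Pi'$ returns a $\tau_n$-block-diagonal matrix, one checks that $D'\in\dtau$ is nonsingular and $\widehat{W}=WD'\Pi'$, which is exactly the asserted equivalence. I expect the main obstacle to be the uniqueness of the maximal partition: showing rigorously that \emph{every} maximal solution regroups $X$ into the same connected components, so that $\Sigma$ genuinely is a block permutation, is where Theorem~\ref{thm1} and the geometric-multiplicity-one hypothesis must be combined with care; the concluding bookkeeping with $D$, $D'$ and $\Pi'$ is routine.
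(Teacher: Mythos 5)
Your proposal is correct and takes essentially the same route as the paper's proof: span invariance to identify the solution sets of the GJBD problems for $\mathcal{A}$ and $\mathcal{B}$, Theorem~\ref{thm1} to factor every diagonalizer as $X\Pi D^{-1}$, and the connectivity graph of the supports of the $X^{\HH}A_iX$'s (the paper's matrix $\widetilde{H}$) to conclude via maximality that each block is a single connected component, so that $\Pi^{\T}\widehat{\Pi}$ is a block permutation. Your closing identity $\widehat{W}=WD\bigl(\Pi'\widehat{D}^{-1}(\Pi')^{\T}\bigr)\Pi'$ is precisely the paper's final computation.
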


\begin{proof}
 First, by Theorem~\ref{thm1},
for any two solutions $(\tau_n,W)$, $(\hat{\tau}_n,\widehat{W})$ to the GJBD problem of $\mathcal{B}$,
there exist two permutation matrices $\Pi$ and $\widehat{\Pi}$
and two nonsingular matrices $D\in\dtau$ and $\widehat{D}\in\mathbb{D}_{\hat{\tau}_n}$
such that $WD=X\Pi$ and $\widehat{W}\widehat{D}=X\widehat{\Pi}$.
Let $H=\sum_{i=0}^q |X^{\HH}B_iX|$, $\widetilde{H}=[\tilde{h}_{ij}]$ with
\begin{align*}
\tilde{h}_{ij}=
\begin{cases}
1,\quad \mbox{if $i\ne j$ and $h_{ij}\ne 0$},\\
0,\quad \mbox{otherwise,}
\end{cases}
\end{align*}
where $h_{ij}$ is the $(i,j)$ entry of $H$.
On one hand, using the fact that $(\tau_n,W)$ and $(\hat{\tau}_n,\widehat{W})$ are both solutions to the GJBD problem,
we know that
$\Pi^{\T}\widetilde{H}\Pi\in\dtau$ and $\widehat{\Pi}^{\T}\widetilde{H}\widehat{\Pi}\in\mathbb{D}_{\hat{\tau}_n}$,  and $\card(\tau_n)=\card(\hat{\tau}_n)$.
On the other hand,
notice that $\widetilde{H}$ is a vertex-adjacency matrix of a graph $G$,
and $G$ has $t$ components $G_1,G_2,\dots,G_t$, and $G_i$ is connected with $n_i$ vertices for $i=1,2,\dots,t$,
where $n_i$ is the $i$th entry of $\tau_n$.
No matter how we relabel the vertices of $G$,
the collection of the numbers of vertices of all connected components of $G$ remains invariant.
Therefore, there exists a permutation matrix $\Pi_t$ of order $t$
such that $\hat{\tau}_n=\tau_n\Pi_t$ and $\widetilde{\Pi}=\Pi^{\T}\widehat{\Pi}$ is the block permutation matrix corresponding with $\Pi_t$.
Then it follows that
\[
\widehat{W}=X\widehat{\Pi}\widehat{D}^{-1}=(X\Pi D^{-1}) D (\Pi^{\T}\widehat{\Pi})\widehat{D}^{-1}
=WD\widetilde{\Pi}\widehat{D}^{-1}=WD(\widetilde{\Pi}\widehat{D}^{-1}\widetilde{\Pi}^{\T})\widetilde{\Pi}.
\]
In other words, all solutions to the GJBD problem of $\mathcal{B}$ are equivalent. Second, notice that for a partition $\tau_n$,
$W$ is a diagonalizer of the JBD problem of $\mathcal{A}$ if and only if
$W$ is a diagonalizer of the JBD problem of $\mathcal{B}$,
since $\subspan\{B_0,B_1,\dots,B_q\}=\subspan\{A_0,A_1,\dots,A_p\}$.
The conclusion follows immediately.
\end{proof}

\bigskip

Theorem~\ref{thm2} implies that the solution to the GJBD problem does not depend on the choices of the matrix polynomials,
neither the choices of $n$ linearly independent eigenvectors.

\subsection{On approximate GJBD problem}\label{subsec:approx}

In this subsection, 
we show that the solution to the approximate GJBD problem can also be written in the form $(\tau_n, X\Pi)$,
where $\tau_n$ is some partition of $n$, $X$ is a nonsingular matrix whose columns are eigenvectors of $P_{\mathcal{A}}(\lambda)$,
$\Pi$ is some permutation matrix.
The following two lemmas are needed for the proof.

%
%
\begin{lemma}\label{lem2}
Let $x$, $y\in\mathbb{C}^n$ be two nonzero vectors, then
$$\min_{t\in\mathbb{C}}\|x-t y\|_2=\|x\|_2\sin\angle(x,y),$$
where $\angle(x,y)$ is the angle between the subspaces spanned by $x$ and $y$, respectively.
\end{lemma}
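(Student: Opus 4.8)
The plan is to recognize the left-hand side as the distance from $x$ to the one-dimensional subspace $\subspan\{y\}$, a quantity achieved by the orthogonal projection of $x$ onto that line. First I would exhibit the candidate minimizer explicitly as the least-squares/projection coefficient
\[
t_\star=\frac{y^{\HH}x}{y^{\HH}y},
\]
so that the residual $r\triangleq x-t_\star y$ satisfies $y^{\HH}r=y^{\HH}x-t_\star\,y^{\HH}y=0$, i.e. $r\perp y$. This orthogonality is the only structural fact I need.

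Next I would confirm that $t_\star$ is genuinely the minimizer by completing the square: for an arbitrary $t\in\mathbb{C}$, write $x-ty=r+(t_\star-t)y$ and use $r\perp y$ to get
\[
\|x-ty\|_2^2=\|r\|_2^2+|t_\star-t|^2\,\|y\|_2^2\ge\|r\|_2^2,
\]
with equality exactly at $t=t_\star$. Thus $\min_{t}\|x-ty\|_2^2=\|r\|_2^2$, and a Pythagorean identity (the same decomposition with $t=0$, giving $\|x\|_2^2=\|r\|_2^2+|t_\star|^2\|y\|_2^2$) lets me evaluate this as
\[
\|r\|_2^2=\|x\|_2^2-\frac{|y^{\HH}x|^2}{\|y\|_2^2}.
\]

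Finally I would convert to the angle form. Using the convention that the angle between the lines $\subspan\{x\}$ and $\subspan\{y\}$ satisfies $\cos\angle(x,y)=|y^{\HH}x|/(\|x\|_2\|y\|_2)\in[0,1]$, the term subtracted above equals $\|x\|_2^2\cos^2\angle(x,y)$, so $\|r\|_2^2=\|x\|_2^2\bigl(1-\cos^2\angle(x,y)\bigr)=\|x\|_2^2\sin^2\angle(x,y)$; taking nonnegative square roots (here $\sin\angle(x,y)\ge0$ since $\angle(x,y)\in[0,\pi/2]$) yields the claim. There is no serious obstacle in this argument; the only point demanding care is the complex case, where one must check that the relevant quantity is $|y^{\HH}x|$ rather than $y^{\HH}x$, and that the stated definition of $\angle(x,y)$ as the principal angle between the spanned subspaces is invariant under scaling $x\mapsto\alpha x$, $y\mapsto\beta y$ for $\alpha,\beta\in\mathbb{C}\setminus\{0\}$, which it is precisely because of the modulus.
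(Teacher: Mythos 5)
Your proof is correct and complete: the paper itself omits the proof (stating only that it ``is simple''), and your orthogonal-projection argument with $t_\star = y^{\HH}x/(y^{\HH}y)$, the Pythagorean decomposition, and the identification $\cos\angle(x,y)=|y^{\HH}x|/(\|x\|_2\|y\|_2)$ is precisely the standard argument the authors had in mind. Your attention to the complex case (the modulus $|y^{\HH}x|$ and scaling invariance of the principal angle) covers the only point where care is needed.
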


The proof of Lemma~\ref{lem2} is simple, we will omit it here.
The following lemma is rewritten from  Theorem 4.1 in \cite{yuji2016eigenvector}.

\begin{lemma}\label{lem:yuji}
Let $(\hat{\lambda}, \hat{x})$ be an approximate eigenpair of $P(\lambda)=\sum_{i=0}^p\lambda^i A_i$ with residual $P(\hat{\lambda})\hat{x}\ne 0$.
Suppose $L(\lambda)=\lambda M + N$ is a linearization of $P(\lambda)$ given by \eqref{mn},
and $L(\lambda)$ has a generalized Schur form
\begin{align*}
Q^{\HH}MZ=\begin{bmatrix}\alpha_1 & \ast \\ 0 & M_1\end{bmatrix},
\quad
Q^{\HH}NZ=\begin{bmatrix}\beta_1 & \ast \\ 0 & N_1\end{bmatrix},
\quad
\lambda_1=-\frac{\beta_1}{\alpha_1},
\end{align*}
in which $M_1$, $N_1$ are both upper-triangular. 
Then the eigenvector $x_1$ of $P(\lambda)$ corresponding with $\lambda_1$ satisfies
\begin{align}\label{sin1}
\sin\angle(\hat{x􏰋},x_1) &\le
\frac{g\|P(\hat{\lambda})\hat{x}\|_2}{\sqrt{\sum_{i=0}^{p-1}|\hat{\lambda}|^{2i}}\|\hat{x}\|_2},
\end{align}
where
\begin{align}\label{gap}
g=g(\lambda_1,\hat{\lambda}; P(\lambda))\triangleq\frac{1}{\|(\hat{\lambda}M_1+N_1)^{-1}\|_2}.
\end{align}
\end{lemma}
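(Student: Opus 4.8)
The plan is to push the problem through the linearization $L(\lambda)=\lambda M+N$ of \eqref{mn}, for which a residual bound on eigenvectors follows directly from the generalized Schur form, and then to descend back to the leading $n$-block, which carries the eigenvectors of $P(\lambda)$ itself. The backbone is the substitution \eqref{uxl} that turns an (approximate) eigenpair of $P$ into an (approximate) eigenpair of $L$.

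First I would attach to $(\hat\lambda,\hat x)$ the block vector $\hat u=u(\hat x,\hat\lambda)=[\hat x^{\T},\hat\lambda\hat x^{\T},\dots,\hat\lambda^{p-1}\hat x^{\T}]^{\T}$ from \eqref{uxl}. Multiplying out $(\hat\lambda M+N)\hat u$ with the companion blocks of \eqref{mn}, the first $p-1$ block-rows telescope to zero (for each block index $i$ the term $\hat\lambda\cdot\hat\lambda^{i-1}\hat x$ coming from $\hat\lambda M$ cancels the $-\hat\lambda^{i}\hat x$ coming from $N$), while the last block-row collapses to $A_p\hat\lambda^p\hat x+\sum_{i=0}^{p-1}\hat\lambda^iA_i\hat x=P(\hat\lambda)\hat x$. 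Hence
\[
(\hat\lambda M+N)\hat u=[0,\dots,0,(P(\hat\lambda)\hat x)^{\T}]^{\T},\qquad \|(\hat\lambda M+N)\hat u\|_2=\|P(\hat\lambda)\hat x\|_2,
\]
and, reading off the blocks of $\hat u$, $\|\hat u\|_2=\sqrt{\sum_{i=0}^{p-1}|\hat\lambda|^{2i}}\,\|\hat x\|_2$. These two identities supply exactly the numerator and the denominator appearing in \eqref{sin1}.

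Next I would extract the exact eigenvector from the Schur form. Since $Q,Z$ are unitary and $\lambda_1=-\beta_1/\alpha_1$, the matrix $Q^{\HH}L(\lambda_1)Z=\lambda_1Q^{\HH}MZ+Q^{\HH}NZ$ has a zero first column, so $u_1\triangleq Z(:,1)$ satisfies $L(\lambda_1)u_1=0$; by the structure of the linearization its leading $n$-block is a scalar multiple of the polynomial eigenvector $x_1$. Writing $Z=[u_1,Z_2]$ and partitioning $w=Z^{\HH}\hat u$ conformally as $w_1\in\mathbb{C}$, $w_2\in\mathbb{C}^{np-1}$, I would transform the residual by $Q^{\HH}$: its lower block equals $(\hat\lambda M_1+N_1)w_2$, so by unitary invariance $\|(\hat\lambda M_1+N_1)w_2\|_2\le\|(\hat\lambda M+N)\hat u\|_2=\|P(\hat\lambda)\hat x\|_2$. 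Provided $\hat\lambda$ is not an eigenvalue of the deflated pencil $(M_1,N_1)$, inverting gives $\|w_2\|_2\le\|(\hat\lambda M_1+N_1)^{-1}\|_2\,\|P(\hat\lambda)\hat x\|_2$, i.e.\ a bound controlled by the gap quantity $g$ of \eqref{gap}.

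Finally I would return to $\hat x$ and $x_1$. From $\hat u=w_1u_1+Z_2w_2$ the part $Z_2w_2=\hat u-w_1u_1$ is the component of $\hat u$ orthogonal to $u_1$, and its leading $n$-block writes $\hat x$ minus a scalar multiple of $x_1$ as a subvector of $Z_2w_2$, whose norm is at most $\|Z_2w_2\|_2=\|w_2\|_2$. Lemma~\ref{lem2} then converts this into $\|\hat x\|_2\sin\angle(\hat x,x_1)\le\|w_2\|_2$, and combining with the identities of the first step gives a residual-over-gap estimate of the form \eqref{sin1}. The main obstacle is precisely this last transfer: one must relate the angle between the full $np$-vectors $\hat u,u_1$ and the angle between their leading $n$-blocks $\hat x,x_1$, and keep careful track of where the normalization factor $\sqrt{\sum_{i=0}^{p-1}|\hat\lambda|^{2i}}=\|\hat u\|_2/\|\hat x\|_2$ and the gap $g$ enter. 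This constant bookkeeping is the delicate point, whereas the linearization identities and the Schur-form residual bound are routine.
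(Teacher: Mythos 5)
First, a point of reference: the paper does not prove this lemma at all --- it is imported verbatim (``rewritten from Theorem 4.1 in [Nakatsukasa--Tisseur]''), so your attempt can only be judged against the standard proof of that theorem, whose skeleton you have correctly reproduced. Your first three steps are sound: the telescoping identity $(\hat\lambda M+N)\hat u=[0,\dots,0,(P(\hat\lambda)\hat x)^{\T}]^{\T}$ with $\hat u$ from \eqref{uxl}, the norm identity $\|\hat u\|_2=\bigl(\sum_{i=0}^{p-1}|\hat\lambda|^{2i}\bigr)^{1/2}\|\hat x\|_2$, the extraction of $u_1=Z(:,1)$ with Vandermonde structure $u_1\propto[x_1^{\T},\lambda_1 x_1^{\T},\dots,\lambda_1^{p-1}x_1^{\T}]^{\T}$, and the deflated-resolvent bound $\|w_2\|_2\le\|(\hat\lambda M_1+N_1)^{-1}\|_2\,\|P(\hat\lambda)\hat x\|_2$ are all correct as you state them.

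The genuine gap is exactly the step you flag but do not close. Extracting only the leading $n$-block of $\hat u-w_1u_1=Z_2w_2$ and applying Lemma~\ref{lem2} to it yields $\|\hat x\|_2\sin\angle(\hat x,x_1)\le\|w_2\|_2$, i.e.\ a bound normalized by $\|\hat x\|_2$ rather than $\|\hat u\|_2$; this is weaker than \eqref{sin1} by the factor $\sqrt{\sum_{i=0}^{p-1}|\hat\lambda|^{2i}}\ge 1$, and your closing sentence that this ``gives a residual-over-gap estimate of the form \eqref{sin1}'' is not justified by what precedes it. The fix is to use all $p$ blocks, not just the first: block $i$ of $\hat u-w_1u_1$ has the form $\hat\lambda^i\hat x-t_i x_1$ for some scalar $t_i$, so Lemma~\ref{lem2} applied blockwise gives $\|\hat\lambda^i\hat x-t_ix_1\|_2\ge|\hat\lambda|^i\,\|\hat x\|_2\sin\angle(\hat x,x_1)$, and summing the squares yields
\begin{align*}
\|w_2\|_2^2=\|\hat u-w_1u_1\|_2^2\ \ge\ \Bigl(\textstyle\sum_{i=0}^{p-1}|\hat\lambda|^{2i}\Bigr)\|\hat x\|_2^2\sin^2\angle(\hat x,x_1),
\end{align*}
which restores the missing factor. (Equivalently: since $\hat u=[1,\hat\lambda,\dots,\hat\lambda^{p-1}]^{\T}\otimes\hat x$ and $u_1\propto[1,\lambda_1,\dots,\lambda_1^{p-1}]^{\T}\otimes x_1$, cosines of angles multiply across Kronecker factors, so $\sin\angle(\hat x,x_1)\le\sin\angle(\hat u,u_1)=\|w_2\|_2/\|\hat u\|_2$.) One further caveat: the bound your (repaired) argument produces has $\|(\hat\lambda M_1+N_1)^{-1}\|_2$ in the numerator, i.e.\ $\sigma_{\min}(\hat\lambda M_1+N_1)$ in the denominator; the paper's \eqref{sin1} as literally written, with $g$ defined by \eqref{gap} as the \emph{reciprocal} of that resolvent norm and placed in the numerator, is inconsistent with this (and with how $g_j$ is later used in Theorem~3.3) --- the statement only matches the derivation if $g$ in \eqref{sin1} is read as $\|(\hat\lambda M_1+N_1)^{-1}\|_2$, so the reciprocal in \eqref{gap} is evidently a typo in the paper rather than an error on your side.
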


%

Define a matrix set $\mathcal{W}$ as
\begin{align}\label{setw}
\mathcal{W}\triangleq \{W=[w_1,w_2,\dots, w_n]\; \big| \; \det(W)\ne 0, \, \|w_i\|_2=1 \mbox{ for } i=1,2,\dots, n\}.
\end{align}

Now we are ready to present the third main theorem.

\begin{theorem}\label{thm3}
Given a matrix set $\mathcal{A}=\{A_i\}_{i=0}^p$.
For a partition $\tau_n=(n_1,\dots,n_t)$,
assume that there exists a matrix $W\in\mathcal{W}$ such that
\begin{align}\label{offbwaw}
\|E_i\|_2\le \mu\|W^{\HH}A_i W\|_2,
\end{align}
where $E_i=\OffBdiag_{\tau_n}(W^{\HH} A_iW)$,
$\mu\ge 0$ is a parameter.
Let $(\lambda_j,x_j)$ for $j=1,2,\dots, n$ be $n$ eigenpairs
of $P_{\mathcal{A}}(\lambda)=\sum_{i=0}^p \lambda^i A_i$ and 
$X=[x_1,x_2,\dots, x_n]\in\mathcal{W}$.
Let $D_i=\Bdiag_{\tau_n}(W^{\HH}A_iW)$, $\mathcal{D}=\{D_i\}_{i=0}^p$,
$P_\mathcal{D}(\lambda)=\sum_{i=0}^p\lambda^i D_i$,
$\mu_j=\mbox{argmin}_{\mu\in\lambda(P_\mathcal{D})} g(\mu, \lambda_j; P_\mathcal{D}(\lambda))$,
$y_j$ be the eigenvector of $P_\mathcal{D}(\lambda)$ corresponding with $\mu_j$, for $j=1,2,\dots, n$.
Further assume that the geometric multiplicities of $\mu_j$'s all equal one,
$Y=[y_1,y_2,\dots, y_n]$ is nonsingular and $(\tau_n, I_n)$ is a solution to the exact GJBD problem of $\mathcal{D}$.
Denote
\begin{subequations}
\begin{align}
g_j&= g(\mu_j, \lambda_j; P_\mathcal{D}(\lambda)),\quad \mbox{for } j=1,2,\dots, n,\\
\eta &= \max_{1\le j\le n} \frac{ g_j\sum_{i=0}^p |\lambda_j|^i \|W^{\HH}A_iW\|_2}{\sqrt{\sum_{i=0}^{p-1} |\lambda_j|^{2i}}}.
\end{align}
\end{subequations}
Then 
there exists a permutation matrix $\Pi$ such that
\begin{align}\label{agjbd}
\|\OffBdiag_{\tau_n}(\Pi^{\T}X^{\HH} A_i X\Pi)\|_2\le C\mu\|X^{\HH} A_i X\|_2,
\end{align}
where
\[
C= \kappa_2(W^{-1}X)\left[\sqrt{n}\eta+\kappa_2(W^{-1}X)(1+\sqrt{n}\mu\eta)(1+\sqrt{n}\mu\eta+\sqrt{n}\eta)\right].
\]
\end{theorem}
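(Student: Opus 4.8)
The plan is to show that the eigenvectors of $P_{\mathcal{A}}(\lambda)$, once expressed in the coordinates supplied by $W$, are genuine \emph{approximate} eigenvectors of the exactly block-diagonal polynomial $P_{\mathcal{D}}(\lambda)$, and then to transport the known block structure of the latter back through $X$. Write $W^{\HH}A_iW=D_i+E_i$ with $D_i=\Bdiag_{\tau_n}(W^{\HH}A_iW)$, and put $Z=W^{-1}X=[z_1,\dots,z_n]$, so $z_j=W^{-1}x_j$. Pre-multiplying $\sum_{i=0}^p\lambda_j^iA_ix_j=0$ by $W^{\HH}$ and inserting $WW^{-1}$ gives
\[
P_{\mathcal{D}}(\lambda_j)z_j=-\Big(\textstyle\sum_{i=0}^p\lambda_j^iE_i\Big)z_j ,
\]
so $(\lambda_j,z_j)$ is an approximate eigenpair of $P_{\mathcal{D}}(\lambda)$ whose residual is bounded, via \eqref{offbwaw}, by $\|P_{\mathcal{D}}(\lambda_j)z_j\|_2\le\mu\big(\sum_i|\lambda_j|^i\|W^{\HH}A_iW\|_2\big)\|z_j\|_2$.

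Next I would feed this residual into Lemma~\ref{lem:yuji}: choosing the generalized Schur ordering that deflates $\mu_j=\argmin_{\mu\in\lambda(P_\mathcal{D})}g(\mu,\lambda_j;P_\mathcal{D})$ first, the estimate \eqref{sin1} yields $\sin\angle(z_j,y_j)\le\mu\eta_j\le\mu\eta$, the factor $\|z_j\|_2$ cancelling against the residual. By Lemma~\ref{lem2} there is then a scalar $t_j$ with $\delta_j:=z_j-t_jy_j$ satisfying $\|\delta_j\|_2=\|z_j\|_2\sin\angle(z_j,y_j)\le\mu\eta\|z_j\|_2$. Because $(\tau_n,I_n)$ solves the exact GJBD of $\mathcal{D}$ and $P_{\mathcal{D}}$ is block diagonal, each $\mu_j$---having geometric multiplicity one---is an eigenvalue of exactly one diagonal block, so each $y_j$ (hence $t_jy_j$) is supported in a single block; nonsingularity of $Y$ then forces exactly $n_k$ of them into block $k$. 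This is the same localization argument as in Theorem~\ref{thm1}, and it produces a permutation $\Pi$---the one claimed---for which $\tilde Y\Pi$ is block diagonal, where $\tilde Y=[t_1y_1,\dots,t_ny_n]$.

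Writing $\bar Z=Z\Pi=\bar Y+\bar\Delta$ with $\bar Y=\tilde Y\Pi\in\dtau$ and $\|\bar\Delta\|_2\le\sqrt n\,\mu\eta\|Z\|_2$ (from $\|\Delta\|_F\le\mu\eta\|Z\|_F$), I would start from $\Pi^{\T}X^{\HH}A_iX\Pi=\bar Z^{\HH}(D_i+E_i)\bar Z$ and split it \emph{asymmetrically} as
\[
\bar Z^{\HH}(D_i+E_i)\bar Z=\bar\Delta^{\HH}(D_i+E_i)\bar Z+\bar Y^{\HH}D_i\bar Y+\bar Y^{\HH}E_i\bar Y+\bar Y^{\HH}(D_i+E_i)\bar\Delta .
\]
The crucial observations are that $\OffBdiag_{\tau_n}(\bar Y^{\HH}D_i\bar Y)=0$ (a product of block-diagonal factors) and that $\bar Y^{\HH}E_i\bar Y$ is \emph{entirely} off-block-diagonal (since $E_i$ vanishes on the block diagonal and $\bar Y\in\dtau$). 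Applying $\OffBdiag_{\tau_n}$, using that this projection does not increase the $2$-norm, and keeping $\bar Z$ intact in the first term is exactly what produces the two different powers of $\kappa:=\kappa_2(W^{-1}X)$: for the first term I use $(W^{\HH}A_iW)Z=Z^{-\HH}(X^{\HH}A_iX)$ to get $\|\bar\Delta^{\HH}(D_i+E_i)\bar Z\|_2\le\|\bar\Delta\|_2\|Z^{-1}\|_2\|X^{\HH}A_iX\|_2$, which costs only \emph{one} factor of $\|Z^{-1}\|_2$ and yields the single-$\kappa$ contribution $\sqrt n\,\eta\,\kappa$; for the two $\bar Y$ terms I convert with the two-sided bound $\|W^{\HH}A_iW\|_2\le\|Z^{-1}\|_2^2\|X^{\HH}A_iX\|_2$ together with $\|\bar Y\|_2\le(1+\sqrt n\,\mu\eta)\|Z\|_2$ and $\|E_i\|_2\le\mu\|W^{\HH}A_iW\|_2$.

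Collecting the three surviving contributions gives $\sqrt n\,\eta\,\kappa+\kappa^2(1+\sqrt n\mu\eta)^2+\kappa^2\sqrt n\eta(1+\sqrt n\mu\eta)$, which factors as $\kappa\big[\sqrt n\eta+\kappa(1+\sqrt n\mu\eta)(1+\sqrt n\mu\eta+\sqrt n\eta)\big]=C$, establishing \eqref{agjbd}. I expect the obstacle to be twofold. First, justifying the block localization of the $y_j$ rigorously, which rests squarely on the geometric-multiplicity-one hypothesis and on $(\tau_n,I_n)$ solving the exact GJBD of $\mathcal{D}$, mirroring Theorem~\ref{thm1}. Second---and this is the genuine bookkeeping---arranging the asymmetric split so that the single- versus double-$\kappa_2(W^{-1}X)$ factors emerge in precisely the combination defining $C$, rather than the looser all-$\kappa^2$ estimate that a symmetric expansion would give; the contraction property $\|\OffBdiag_{\tau_n}(M)\|_2\le\|M\|_2$ (block-diagonal pinching) is the small auxiliary fact that keeps the constant sharp.
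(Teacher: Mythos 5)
Your proposal is correct and follows the paper's own proof essentially step for step: the paper likewise obtains $\Pi$ from Theorem~\ref{thm1} applied to $\mathcal{D}$, treats $(\lambda_j,W^{-1}x_j)$ as an approximate eigenpair of $P_{\mathcal{D}}(\lambda)$, combines Lemma~\ref{lem:yuji} (with the residual's $\|W^{-1}x_j\|_2$ cancelling) and Lemma~\ref{lem2} to get $\|f_j\|_2\le\mu\eta\|W^{-1}x_j\|_2$ and $\|F\|_2\le\sqrt{n}\mu\eta\|W^{-1}X\|_2$, and then performs the identical asymmetric splitting via $X=W(\widehat{Y}+F)$ --- keeping $X$ intact in the $F^{\HH}W^{\HH}A_iX$ term so that it costs only one factor of $\|X^{-1}W\|_2$, converting the $\widehat{Y}$-terms with $\|W^{\HH}A_iW\|_2\le\|X^{-1}W\|_2^2\|X^{\HH}A_iX\|_2$, and dropping $\widehat{Y}^{\HH}D_i\widehat{Y}$ under $\OffBdiag_{\tau_n}$ since $\widehat{Y}\Pi\in\dtau$ --- arriving at exactly the constant $C$. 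The only cosmetic differences are notational ($Z,\bar\Delta$ versus $W^{-1}X,F$) and that you re-derive the block localization of the $y_j$'s where the paper simply invokes Theorem~\ref{thm1}.
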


\begin{proof}
First,  using the assumption that $(\tau_n,I_n)$ is a solution to the exact GJBD problem of $\mathcal{D}$, together with Theorem~\ref{thm1},
we know that there exists a permutation matrix $\Pi$ such that $Y\Pi\in\dtau$.

Second, for each $1\le j\le n$, take $(\lambda_j, W^{-1}x_j)$ as an approximate eigenpair of $P_\mathcal{D}(\lambda)$.
Using Lemma~\ref{lem:yuji}, we have
\begin{align}\label{sinwyx}
\sin\angle(W^{-1}x_j, y_j)\le \frac{g_j\|P_\mathcal{D}(\lambda_j)W^{-1}x_j\|_2}{\sqrt{\sum_{i=0}^{p-1} |\lambda_j|^{2i}}\|W^{-1}x_j\|_2}.
\end{align}
Let $\mathcal{E}=\{E_i\}_{i=0}^p$, $P_{\mathcal{E}}(\lambda)=\sum_{i=0}^p\lambda^i E_i$,
it holds that $P_\mathcal{D}(\lambda)+P_{\mathcal{E}}(\lambda)=W^{\HH}P_{\mathcal{A}}(\lambda)W$.
Then it follows from \eqref{offbwaw} and  \eqref{sinwyx} that
\begin{align*}
\sin\angle(W^{-1}x_j, y_j)
&\le \frac{g_j(\|W^{\HH}P_{\mathcal{A}}(\lambda_j)x_j\|_2+\|P_{\mathcal{E}}(\lambda_j) W^{-1}x_j\|_2)}
{\sqrt{\sum_{i=0}^{p-1} |\lambda_j|^{2i}}\|W^{-1}x_j\|_2}\\
&\le \frac{g_j\|P_{\mathcal{E}}(\lambda_j)\|_2 }{\sqrt{\sum_{i=0}^{p-1} |\lambda_j|^{2i}}}
\le \frac{g_j\sum_{i=0}^p |\lambda_j|^i \|E_i\|_2 }{\sqrt{\sum_{i=0}^{p-1} |\lambda_j|^{2i}}}\\
&\le \frac{\mu g_j\sum_{i=0}^p |\lambda_j|^i \|W^{\HH}A_iW\|_2}{\sqrt{\sum_{i=0}^{p-1} |\lambda_j|^{2i}}}\le \mu\eta.
\end{align*}
By Lemma~\ref{lem2}, there exists a $t_j\in\mathbb{C}$ such that
\[
\|W^{-1}x_j-t_jy_j\|_2=\|W^{-1}x_j\|_2 \sin\angle(W^{-1}x_j,y_j).
\]
Let $f_j=W^{-1}x_j-t_jy_j$, then it holds that $\|f_j\|_2\le \mu\eta \|W^{-1}x_j\|_2$.

Now denote $\widehat{Y}=[t_1y_1,\dots, t_ny_n]$, $F=[f_1,\dots, f_n]$,
we have $X=W(\widehat{Y}+F)$ and
\begin{equation}\label{fbound}
\|F\|_2\le\|F\|_F\le \mu\eta\|W^{-1}X\|_F\le \sqrt{n}\mu\eta\|W^{-1}X\|_2.
\end{equation}
Finally, direct calculations give rise to
\begin{subequations}
\begin{align}
&\mbox{}\quad\,\, \|\OffBdiag_{\tau_n}(\Pi^{\T}X^{\HH} A_i X\Pi)\|_2\notag\\
&=\|\OffBdiag_{\tau_n}(\Pi^{\T}X^{\HH}A_iX\Pi - \Pi^{\T}\widehat{Y}^{\HH} D_i\widehat{Y}\Pi)\|_2
\label{xax0}\\
&=\|\OffBdiag_{\tau_n}(\Pi^{\T}X^{\HH}A_iX\Pi- \Pi^{\T}\widehat{Y}^{\HH}W^{\HH}A_iW\widehat{Y}\Pi+\Pi^{\T} \widehat{Y}^{\HH}E_i\widehat{Y}\Pi)\|_2
\label{xax1}\\
&\le \|X^{\HH}A_iX- \widehat{Y}^{\HH}W^{\HH}A_iW\widehat{Y}+\widehat{Y}^{\HH}E_i\widehat{Y}\|_2\notag\\
&=\| \widehat{Y}^{\HH}W^{\HH}A_iWF + F^{\HH}W^{\HH}A_iX+\widehat{Y}^{\HH}E_i\widehat{Y}\|_2\label{xax2}\\
&\le \|\widehat{Y}\|_2 \|F\|_2\|W^{\HH}A_iW\|_2+ \|X^{-1}W\|_2 \|F\|_2 \|X^{\HH}A_iX\|_2+\|\widehat{Y}\|_2^2\|E_i\|_2\notag\\
&\le (\|W^{-1}X-F\|_2\|X^{-1}W\|_2^2  +\|X^{-1}W\|_2)\|F\|_2 \|X^{\HH}A_iX\|_2\notag\\
&\mbox{}\hspace{5.7cm}+\mu\|W^{\HH}A_iW\|_2\|W^{-1}X-F\|_2^2
\label{xax3}\\
&\le [1+\kappa_2(W^{-1}X)(1+\sqrt{n}\mu\eta)]\kappa_2(W^{-1}X)\sqrt{n}\mu\eta\|X^{\HH}A_iX\|_2\notag\\
&\mbox{}\hspace{4.5cm}+\mu(1+\sqrt{n}\mu\eta)^2\kappa_2^2(W^{-1}X)\|X^{\HH}A_iX\|_2
\notag\\
&=\mu C\|X^{\HH}A_iX\|_2,\notag
\end{align}
\end{subequations}
where
\eqref{xax0} uses $\OffBdiag_{\tau_n}(\Pi^{\HH}\widehat{Y}^{\HH} D_i\widehat{Y}\Pi)=0$
since $\widehat{Y}\Pi\in\dtau$,
\eqref{xax1} uses $D_i=W^{\HH}A_iW-E_i$,
\eqref{xax2} uses $X=W(\widehat{Y}+F)$,
\eqref{xax3} uses $\widehat{Y}=W^{-1}X-F$ and \eqref{offbwaw}.
This completes the proof.

\end{proof}

Several remarks follow.
\begin{remark}\label{rem1}
The assumption that ``$(\tau_n, I_n)$ is a solution to the exact GJBD problem of $\mathcal{D}$ ''  is equivalent to say that ``$\mathcal{D}$ can not be further block diagonalized''.
\end{remark}

\begin{remark}\label{rem2}
If $\mu=0$, the approximate GJBD problem becomes the exact GJBD problem. The conclusion in Theorem~\ref{thm3} agrees with that in Theorem~\ref{thm1}.
\end{remark}

\begin{remark}\label{rem3}
The inequality \eqref{agjbd} implies that $X\Pi$ is a ``sub-optimal'' diagonalizer.
The constant $C$ plays a crucial role in bounding the off-block diagonal part of $\Pi^{\T}X^{\HH}A_iX\Pi$.
Notice that the condition number of $W^{-1}X$ dominates the value of $C$. 
When $W$ and $X$ are good conditioned, $\kappa_2(W^{-1}X)$ is small.
However, if $W$ or $X$ is ill-conditioned, $\kappa_2(W^{-1}X)$ can be quite large,
which means that $X\Pi$ can be ``low-quality''.
In fact, from the perturbation theory of JBD problem \cite{cai2017perturbation}
 (see also perturbation theory of JD problem in \cite{shi2015some}, \cite{afsari2008sensitivity}),
the diagonalizer is sensitive to the perturbation when it is ill-conditioned.
Therefore, it is not surprising to draw the conclusion that $C$ can be large when $W$ or $X$ is ill-conditioned.

\end{remark}

\section{Numerical Method}\label{sec:method}
According to Theorems~\ref{thm1} and \ref{thm3},
solutions to the exact/approximate GJBD problem can be obtained
by the following three-stage procedure:
\begin{description}
\item[Stage 1 -- eigenproblem solving stage.] Compute $n$ linearly independent eigenvectors $x_1,\ldots,x_n$ 
of the matrix polynomial $P_{\mathcal{A}}(\lambda)$, and let ${X}=[x_1, \dots, x_n]$;

\item[Stage 2 -- block structure revealing stage.] Determine $\tau_n$ and a permutation matrix $\Pi$ such that $\Pi^{\T}{X}^{\HH} A_i{X}\Pi$  for $i=0,1,\ldots,p$ are all approximately $\tau_n$-block diagonal
and $\card(\tau_n)$ is maximized.

\item [Stage 3 -- refinement stage.] Refine $X\Pi$ to improve the quality of the diagonalizer.
\end{description}
We call the above three-stage method
{\em a partial eigenvector approach with refinement (PEAR) method} for the GJBD problem.
Next, we will discuss the implement details of each stage of the above procedure.

\subsection{Stage 1.}
In this stage, we compute $n$ linearly independent eigenvectors of PEP
$P_{\mathcal{A}}(\lambda)$.
By \cite[Section 3]{pereira2003solvents}, the number of choices for $n$ linearly independent eigenvectors is no less than $p$.
How shall we make such a choice?
To answer this question, let us consider the following GJBD problem first.
\begin{example}\label{eg1}
Let $A_i=VD_iV^{\HH}$ for $ i=0, 1, 2$,
where
\begin{align*}
V&=\begin{bmatrix}
0.5377 + 2.7694\imath & 0.8622 + 0.7254\imath & -0.4336 - 0.2050\imath \\
1.8339 - 1.3499\imath & 0.3188 - 0.0631\imath & 0.3426 - 0.1241\imath \\
-2.2588 + 3.0349\imath & -1.3077 + 0.7147\imath & 3.5784 + 1.4897\imath \\
\end{bmatrix},\\
D_0&=
\begin{bmatrix}
0.2939 - 0.7873\imath & 0.0137 + 0.0044\imath & -0.0171 + 0.0038\imath \\
0.0032 - 0.0086\imath & 0.8884 - 2.9443\imath & -1.0689 + 0.3252\imath \\
0.0031 - 0.0003\imath & -1.1471 + 1.4384\imath & -0.8095 - 0.7549\imath \\
\end{bmatrix},\\
D_1&=
\begin{bmatrix}
-0.1649 + 0.6277\imath & -0.0077 + 0.0022\imath & 0.0037 + 0.0075\imath \\
-0.0109 + 0.0055\imath & 1.0933 - 1.2141\imath & -0.8637 - 0.0068\imath \\
0.0003 + 0.0110\imath & 1.1093 - 1.1135\imath & 0.0774 + 1.5326\imath \\
\end{bmatrix},\\
D_2&=
\begin{bmatrix}
1.5442 + 0.0859\imath & -0.0076 + 0.0025\imath & -0.0140 + 0.0062\imath \\
-0.0018 + 0.0142\imath & -1.4916 - 0.6156\imath & -1.0616 - 0.1924\imath \\
-0.0020 + 0.0029\imath & -0.7423 + 0.7481\imath & 2.3505 + 0.8886\imath \\
\end{bmatrix}.
\end{align*}
Let $W=V^{-\HH}$, then we know that $(\tau_n, W)$ is a solution to the approximate GJBD problem of $\{A_0,A_1,A_2\}$, where $\tau_n=(1,2)$.

By calculations, we know that all eigenvalues of the PEP $P_{\mathcal{A}}(\lambda)=\lambda^2 A_2+\lambda A_1 +A_0$
are simple, and the eigenvector matrix $X$, whose columns are all of unit length, can be given by
\begin{align*}
X(:,1:3)&=\begin{bmatrix}
-0.5269 - 0.3384\imath & -0.3566 + 0.4066\imath & 0.0646 - 0.5182\imath  \\
-0.1210 - 0.7587\imath & -0.8248 + 0.0209\imath & 0.0578 - 0.7205\imath  \\
-0.1247 + 0.0436\imath & -0.0681 + 0.1491\imath & -0.4029 - 0.2062\imath
\end{bmatrix},
\\
X(:,4:6)&=
\begin{bmatrix}
0.1636 - 0.3165\imath & -0.2841 + 0.2244\imath & 0.4363 - 0.4082\imath \\
0.7059 + 0.6000\imath & -0.4017 - 0.8314\imath & 0.7443 - 0.1520\imath \\
0.0065 - 0.1214\imath & -0.0599 + 0.1128\imath & -0.0381 - 0.2541\imath
\end{bmatrix}.
\end{align*}
Let  $\widehat{W}$ be a 3-by-3 matrix whose columns are selected from the columns of $X$.
Define
\begin{align*}
f(\tau_n,\widehat{W})&=\sum_{i=0}^2 \|\OffBdiag_{\tau_n}(\widehat{W}^{\HH}A_i\widehat{W})\|_F^2,\\
\theta(W,\widehat{W})&=\max\{ \angle(W(:,1),\widehat{W}(:,1)),
\angle(W(:,[2, 3]),\widehat{W}(:,[2, 3]))
\}.
\end{align*}
For four different choices of $\widehat{W}$,
we compute the condition number of $\widehat{W}$, $f(\tau_n,\widehat{W})$ and $\theta(W,\widehat{W})$, and the results are listed in Table~\ref{tab1}.
\begin{table}[ht]
\begin{center}
\begin{tabular}{c||c|c|c|c}\toprule
Case  & $\widehat{W}$ & $\mbox{cond}(\widehat{W})$ & $f(\tau_n,\widehat{W})$  & $\theta(W,\widehat{W})$ \\ \hline\hline
1 & $X(:,[4, 1, 2])$ & 2.3e1 &0.0048 & 0.0066 \\ \hline
2 & $X(:,[4, 2, 3])$ & 9.2e0 &0.1061 & 0.0154 \\ \hline
3 & $X(:,[6, 4, 5])$ & 4.9e2 &0.0247 & 0.8855 \\ \hline
4 & $X(:,[1, 2, 3])$ & 1.2e3 &9.5550 & 0.5005
  \\ \bottomrule
\end{tabular}
\caption{Condition number, cost function $f(\tau_n,\widehat{W})$ and angle $\theta(W,\widehat{W})$}
\end{center}
\label{tab1}
\end{table}
\end{example}

From Example~\ref{eg1} we can see that
the qualities of the four approximate diagonalizers $\widehat{W}$
are quite different although they are all consisted of linearly independent eigenvectors:
$\widehat{W}$ in Cases 1 and 2 are good in the sense that
$f(\tau_n,\widehat{W})$ and $\theta(W,\widehat{W})$ are  small,
and $\widehat{W}$ in Case 1 is better;
$\widehat{W}$ in Case 3 is not good though $f(\tau_n,\widehat{W})$ is small;
$\widehat{W}$ in Case 4 is even not an approximate diagonalizer.

Based on the above observations and also Remark~\ref{rem3},
it is reasonable to choose $n$ linearly independent eigenvectors $x_1, x_2, \dots, x_n$ such that
the condition number of $[x_1, x_2, \dots, x_n]$ is as small as possible.
Then the task of Stage 1 is reduced to find such $n$ linearly independent eigenvectors.
Classic eigensolvers for PEP concentrate on computing
extreme eigenvalues or eigenvalues close to a prescribed number
(and their corresponding eigenvectors),
which are not suitable for the task.
In this paper, we use  the following two steps to accomplish the task:

\medskip

\noindent{Step 1.} Compute $k$ unit length eigenvectors $x_1, \dots, x_{k}$ of the PEP by certain eigensolvers, where $k$ is larger than $n$, say a multiple of $n$, $k=2n, 4n$.
In our numerical tests, we first transform the PEP into the GEP $(\lambda M+N)u=0$,
where $u$, $M$ and $N$ are given by \eqref{uxl}, \eqref{mn}, respectively.
Then when $p$ is small, we use QZ method to find all eigenvectors of the GEP;
When $p$ is large, we use Arnoldi method to compute $k$ largest magnitude eigenvalues and the corresponding eigenvectors of the GEP.
Finally, the eigenvectors of the PEP can be obtained via those of the GEP.

\medskip

\noindent{Step 2.} Compute the QR decomposition of $[x_1,  \dots, x_k]$ with column pivoting,
i.e., $[x_1,\dots, x_k]P=QR$, where $P$ is a permutation matrix of order $k$, $Q$ is unitary, $R$ is upper triangular with main diagonal entries in a decreasing order.
If the $(n,n)$ entry of $R$ is small,
\footnote{This indicates that  $x_1,\dots, x_k$ are almost linearly dependent,
and it is generally impossible to find a good diagonalizer via those eigenvectors.}
return to Step 1 to find more eigenvectors,
else set $X$ as the first $n$ columns of $[x_1,\dots, x_k]P$.

The above two-step procedure is perhaps the simplest way to accomplish the task of Stage 1,
but it maybe still worth developing some particular eigensolvers for it.

\subsection{Stage 2.}
In this stage, we need to determine a partition $\tau_n$ and a permutation matrix $\Pi$
such that $\Pi^{\T}X^{\HH}A_i X\Pi$'s are all approximately $\tau_n$-block diagonal
and $\card(\tau_n)$ is maximized.
This stage is of great importance for determining the solutions to the GJBD problem,
but without knowing the number of the diagonal blocks,
determining a correct $\tau_n$ can be very tricky,
especially when the noise is high and the block diagonal structure is fussy.
From our numerical experience, the spectral clustering method \cite{von2007tutorial} is powerful and efficient for finding $\tau_n$ and $\Pi$.
Let
\begin{align}
H=[h_{ij}]=\sum_{i=0}^p(|X^{\HH}A_iX|+|X^{\HH}A_i^{\HH}X|),
\end{align}
we can define some (normalized) graph Laplacian matrix $L$ from $H$.
For example, in this paper, we set
\begin{align}\label{eq:l}
L=[l_{ij}] \quad \mbox{with} \quad
l_{ij}=
\left\{
  \begin{array}{cc}
 -1, & \mbox{if} \ i\ne j, \ h_{ij}> \sum_{k=1}^n h_{kj}/n; \\
 0, & \mbox{if} \ i\ne j, \ h_{ij}\le \sum_{k=1}^n h_{kj}/n; \\
 -\sum_{k\ne j} l_{kj}, & \mbox{if}\ i=j.
  \end{array}
\right.
\end{align}
Then the number of diagonal blocks equal the multiplicity of zero as an eigenvalue of $L$,
using $k$-means method \cite{arthur2007k}\cite{macqueen1967some},
$\tau_n$ and $\Pi$ can be determined by clustering the eigenvectors corresponding to eigenvalue zero.


\subsection{Stage 3.}
Theorem~\ref{thm3} only ensures that $\widehat{W}=X\Pi$ is a ``sub-optimal'' diagonalizer.
In order to improve the quality of the diagonalizer, we propose the following refinement procedure.

Let $(\tau_n, \widehat{W})$ be an approximate solution to the GJBD problem produced by the first two stages of PEAR.
Suppose $\tau_n=(n_1,\dots,n_t)$,
$\widehat{W}
=[\widehat{W}_1, \dots, \widehat{W}_t]$
with $\widehat{W}_j\in\mathbb{C}^{n\times n_j}$ for $j=1,\dots,t$.
Denote $\widehat{W}_{-j}=[W_1, \dots, W_{j-1}, W_{j+1}, \dots, W_t]$
and
\begin{align}
\mathscr{B}_j=\left\{
\begin{array}{cc}
\left[A_0 \widehat{W}_{-j},
  A_1\widehat{W}_{-j},
\dots,
 A_p \widehat{W}_{-j}\right]^{\HH}, & \mbox{if $A_j$'s are Hermitian;} \\
\left[A_0 \widehat{W}_{-j}, A_0^{\HH}\widehat{W}_{-j},
\dots,
A_p \widehat{W}_{-j}, A_p^{\HH}\widehat{W}_{-j}\right]^{\HH}
& \mbox{Otherwise.}
\end{array}
\right.
\end{align}
Fixing $\widehat{W}_{-j}$, we can minimize
$f(\tau_n,\widehat{W})=\sum_{i=0}^p \|\OffBdiag_{\tau_n}(\widehat{W}^{\HH}A_i\widehat{W})\|_F^2$
\footnote{$f(\tau_n,\widehat{W})$ is in fact a commonly used cost function for JBD problem.
One can of course use some other cost functions,
but this one seems the most simple one for our refinement purpose.}
by updating $\widehat{W}_j$ as $V_j$,
where the column vectors of $V_j$ are the $n_j$ right singular vectors of $\mathscr{B}_j$
corresponding with the $n_j$ smallest singular values.
For $j=1,2,\dots, t$, we update $\widehat{W}_j$ as above, we call it a refinement loop.
We can repeat the refinement loop until the diagonalizer is sufficiently good.
In our numerical test, three refinement loops are sufficient.

Note that the effectiveness of this refinement procedure is built on
the assumption that $\tau_n$ obtained in Stage 2 is correct.
Without such an assumption, the refinement procedure may make the diagonalizer even more worse.

It is also worth mentioning here that the above refinement procedure can be used to update
any approximate diagonalizer, not necessarily the one produced by Stage 2.
As a matter of fact, the procedure itself can be used to find a diagonalizer,
but without a good initial guess, the convergence can be quite slow.

\begin{remark}
In Stage 1, assuming each eigenpair can be found in $\mathcal{O}(1)$ steps,
then $k=\mathcal{O}(n)$ eigenpairs can be obtained in $\mathcal{O}(n^3)$ steps.
In Stage 2, first a symmetric eigenvalue problem needs to be solved, which can be done in $\mathcal{O}(n^3)$ flops; second, $k$-means needs to be performed,
which is so fast in practice that we can ignore its cost.
Stage 3 requires $\mathcal{O}(n^3)$ flops, including the matrix-matrix multiplications and singular value decomposition (SVD), etc.
So the overall computational cost of PEAR is $\mathcal{O}(n^3)$ flops.
\end{remark}

When the matrices $A_i$'s are all real, a real diagonalizer is required.
But the diagonalizer returned by PEAR is in general complex.
Can we make some simple modifications to PEAR to get a real diagonalizer?
The answer is positive. 
In fact, we can add the following ``Stage 2.5'' to get a real diagonalizer from PEAR.

\subsection{Stage 2.5}
Let $\widehat{W}=[\widehat{W}_1, \dots, \widehat{W}_t]$ be the approximate diagonalizer at the beginning of Stage 3,
where $\widehat{W}_j\in\mathbb{C}^{n\times n_j}$ for $j=1,\dots,t$.
For each $j$, denote $\widehat{W}_j=\widehat{W}_{jR}+\imath \widehat{W}_{jI}$, 
where $\widehat{W}_{jR}, \widehat{W}_{jI}\in\mathbb{R}^{n\times n_j}$ are the real and imaginary parts of $\widehat{W}_j$, respectively .
Let the SVD of $[\widehat{W}_{jR}, \widehat{W}_{jI}]$ be $[\widehat{W}_{jR}, \widehat{W}_{jI}]=U_j\Sigma_j V_j^{\T}$,
where $U_j\in\mathbb{R}^{n\times n}, V_j\in\mathbb{R}^{2n_j\times 2n_j}$ are orthogonal, 
$\Sigma_j\in\mathbb{R}^{n\times 2n_j}$ is diagonal with its main diagonal nonnegative and in a decreasing order.
Then we update $\widehat{W}_j$ as $\widehat{W}_j=U_j(:,1:n_j)$.
As a consequence, $\widehat{W}=[\widehat{W}_1, \dots, \widehat{W}_t]$ will be a real diagonalizer.

The mechanic behind the above procedure is the following critical assumption:

A.) {\em All solutions to the GJBD problem of $\mathcal{A}$ are equivalent, 
and the intersection of all diagonalizers and $\mathbb{R}^{n\times n}$ is nonempty.}

Based on the above assumption, for the diagonalizer $\widehat{W}\in\mathbb{C}^{n\times n}$ at the beginning of Stage 3,
there exists a nonsingular $\tau_n$-block diagonal matrix $D=\diag(D_{11},\dots, D_{tt})$ such that 
$\widetilde{W}:=\widehat{W}D^{-1}$ is a real diagonalizer.
Partition $\widetilde{W}$ as $=[\widetilde{W}_1, \dots, \widetilde{W}_t]$ with $\widetilde{W}_j\in\mathbb{C}^{n\times n_j}$,
and let $D_{jj}=D_{jR}+\imath D_{jI}$, where $D_{jR}, D_{jI}\in\mathbb{R}^{n_j\times n_j}$. 
Then by $\widetilde{W}_j=\widehat{W}_jD_{jj}^{-1}$, we have
\begin{align*}
\widetilde{W}_j [D_{jR}, D_{jI}] =[\widehat{W}_{jR},\widehat{W}_{jI}].
\end{align*}
Then it follows that $\subspan([\widehat{W}_{jR},\widehat{W}_{jI}])=\subspan(\widetilde{W}_j [D_{jR}, D_{jI}])=\subspan(\widetilde{W}_j)$
since $[D_{jR}, D_{jI}]$ is of full row rank.
Therefore, updating $\widehat{W}_j$ as $U_j(:,1:n_j)$, we know that $\widehat{W}=[U_1(:,1:n_1),\dots, U_t(:,1:n_t)]$ is a real diagonalizer.

The  assumption A.) in general holds, but not always.
For example, consider the GJBD problem of $\{A_0,A_1\}$, where
\begin{align*}
A_0=\begin{bmatrix}1 & 1 & 1\\ 1 & 1& -3\\ -3 & 1& 1\end{bmatrix},\quad
A_1=\begin{bmatrix}3 & -1 & 1\\ -1 & 3& -3\\ -3 & 1& 3\end{bmatrix}.
\end{align*}
Then by calculations, we know that $((1, 2), \bsmat 1& 1 & 0\\ 1& 0 & 1\\ 0& 1& 1\esmat)$ and $((1, 1, 1), \bsmat 1 & 1-\imath & 1+\imath \\ 1 & 1+\imath & 1-\imath \\ 0 & 2 & 2\esmat)$
are two inequivalent solutions to the GJBD problem with the diagonalizers in $\mathbb{R}^{n\times n}$ and $\mathbb{C}^{n\times n}$, respectively. 
How to find a real diagonalizer when assumption A.) does not hold is difficult and needs further investigations.
In our numerical tests (subsection~\ref{eg2}), Stage 2.5 works perfectly for finding a real diagonalizer.

\section{Numerical Examples}\label{sec:eg}
In this section, we present several examples to illustrate the performance of PEAR.
All the numerical examples were carried out on a quad-core $\text{Intel}^{\tiny\textregistered} \text{Xeon}^{\tiny\textregistered}$ Processor E5-2643 running at 3.30GHz with 31.3GB AM,
using MATLAB R2014b with machine $\epsilon=2.2\times 10^{-16}$. 

We compare the performance of PEAR (with and without refinement) 
with the second GJBD algorithm in \cite{cai2017algebraic}, namely, $\star$-commuting based method with a conservative strategy, SCMC for short,
and also two algorithms for the JBD problem, namely, JBD-LM \cite{cherrak2013non} and JBD-NCG \cite{nion2011tensor}.
For PEAR, three refinement loops are used to improve the quality of the diagonalizer.
For SCMC, the tolerance is set as $3n^210^{-\mbox{SNR}/20}$,
where $\mbox{SNR}$ is the signal-to-noise ratio defined below.
For the JBD-LM method, the stopping criteria are $\|W_{k+1}-W_k\|_F<10^{-12}$, 
or $|\frac{\phi_k-\phi_{k+1}}{\phi_k}|<10^{-8}$ for successive 5 steps, or the maximum number of iterations,
which is set as 200, exceeded.
For the JBD-NCG method, the stopping criteria are $|\phi_k-\phi_{k+1}|<10^{-8}$, or $|\frac{\phi_k-\phi_{k+1}}{\phi_k}|<10^{-8}$ for successive 5 steps, 
or the maximum number of iterations, which is set as 2000, exceeded.
Here $W_k$, $\phi_k$ are the $W$ matrix and the value of the cost function in $k$th step, respectively.
In JBD-LM and JBD-NCG algorithms, 20 initial values (19 random initial values and an EVD-based initial value \cite{nion2011tensor}) are used to iterate 20 steps first, and then the iteration which produces the smallest value of the cost function proceeds until one of the stopping criteria is satisfied.

\subsection{Random data}\label{eg1}

Let $\tau_n=(n_1,n_2,\dots,n_t)$ be a partition of $n$,
we will generate the matrix set $\mathcal{A}=\{A_i\}_{i=0}^p$ by the following model:
 \begin{align}\label{randmodel}
   A_i=V^{\HH} D_iV, \quad i=0,1,\ldots,p,
 \end{align}
where $V$ and $D_i$ are, respectively, the mixing matrix and the approximate $\tau_n$-block diagonal matrices.
The elements in V and $\Bdiag_{\tau_n}(D_i)$ are all complex numbers whose real and imaginary parts are drawn
from a standard normal distribution, while the elements in $\OffBdiag_{\tau_n}(D_i)$ are all complex numbers whose real and imaginary parts are drawn from a normal distribution with mean zero and variance $\sigma^2$. The signal-to-noise ratio is defined as $\mbox{SNR}=10\log(1/\sigma^2)$.

For model \eqref{randmodel}, we define the following performance index
to measure the quality of the computed diagonalizer $W$, which is used in \cite{cai2017algebraic} and \cite{cai2015matrix}:
\begin{align*}
\PI(V^{-1},W)=\min_{\pi}\max_{1\leq i\leq t}\subspace(V_i, W_{\pi(i)}),
 \end{align*}
where $V^{-1}=[V_1, V_2,\dots, V_t]$,
$W=[W_1, W_2,\dots, W_t]$, $V_i, W_{\pi(i)}\in\mathbb{C}^{n\times n_i}$ for $i=1,2,\ldots, t$, the vector $(\pi(1),\pi(2),\ldots,\pi(t))$ is a permutation of $\{1,2,\dots t\}$ satisfying $(n_{\pi(1)},n_{\pi(2)},\ldots,n_{\pi(t)})=\tau_n$,
the expression $\subspace(E,F)$ denotes the angle between two subspaces specified by the columns of $E$ and $F$,
which can be computed by MATLAB function ``{subspace}''.

In what follows,  we generate the matrix set by model \eqref{randmodel} with the following parameters:
\begin{description}
\item[P1.] $n=9$, $\tau_n=(3,3,3)$, $p+1=25$.
\item[P2.] $n=9$, $\tau_n=(2,3,4)$, $p+1=25$.
\item[P3.] $n=9$, $\tau_n=(2,3,4)$, $p+1=20,40,\dots,200$, SNR=80.
\item[P4.] $n=9m$, $\tau_n=(2m,3m,4m)$ for $m=1,2,\dots,6$, $p+1=10$, SNR=80.  
\end{description}

\textbf{Experiment 1.}
For different SNRs, we generate the data with parameters P1 and P2, respectively.
Then for each matrix set generated by those parameters, we perform PEAR for 1000 independent runs.
It is known that when the SNR is small, PEAR may fail, 
specifically, the block diagonal structure in Stage 2 of PEAR is fuzzy, the computed $\hat{\tau}_n$ may not be consistent with the true $\tau_n$,
namely, there is no $(0,1)$ matrix $N$ such that $\tau_n=\hat{\tau}_nN$.
In Table~\ref{tab:percentage}, we list the percentages of successful runs of {PEAR}.
From the table we can see that the smaller the SNR is, the more likely PEAR may fail.
In our tests, when SNR is no less than 60, PEAR  didn't fail.
\begin{table}[h]
\footnotesize
\centering
\begin{tabular}{c|cccccccc}\toprule
  \mbox{SNR} & $30$ & $40$ & $50$ & $60$ & $70$ & $80$ & $90$ & $100$  \\\hline
 P1.   & $74.1\%$ & $96.1\%$ & $99.7\%$ & $100\%$ & $100\%$ & $100\%$ & $100\%$& $100\%$  \\\hline
 P2.  & $65.9\%$ & $96.4\%$ & $99.6\%$ & $100\%$ & $100\%$& $100\%$ & $100\%$& $100\%$  \\\hline
\end{tabular}
\caption{The percentages of successful runs of {PEAR} over $1000$ independent runs}
\label{tab:percentage}
\end{table}


\textbf{Experiment 2.} For SNR= 40, 60, 80, 100,
we generate the data with parameters P1 and P2, respectively.
For each matrix set generated by those parameters, 
we perform JBD-LM, JBD-NCG, SCMC, PEA (PEAR without refinement) and PEAR for 50 independent runs,
then compare their performance indices.
The box plot (generated by MATLAB function ``boxplot'') of the results are displayed in Figures \ref{Fig_333_SNR} and \ref{Fig_234_SNR}.
\begin{figure}[!ht]
\centering
\includegraphics[width=1.00\textwidth]{./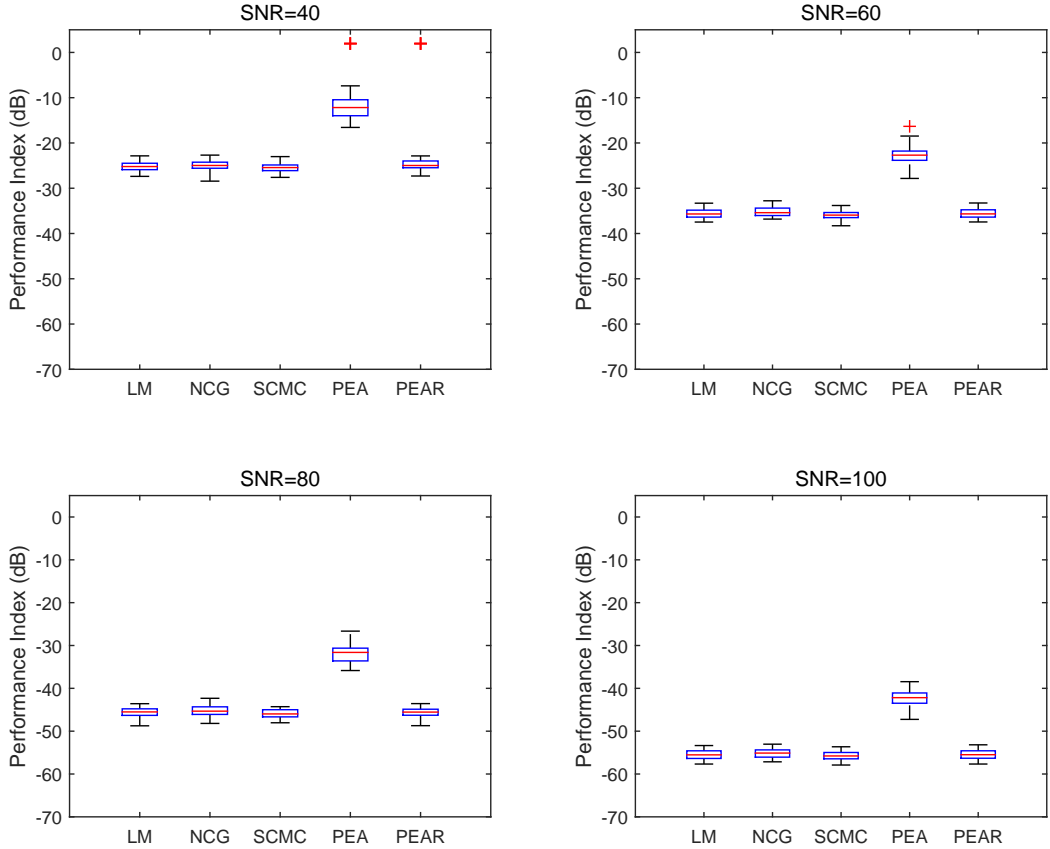}
\caption{Performance indices of five methods with different SNRs for P1}\label{Fig_333_SNR}
\end{figure}

\begin{figure}[!ht]
\centering
\includegraphics[width=1.00\textwidth]{./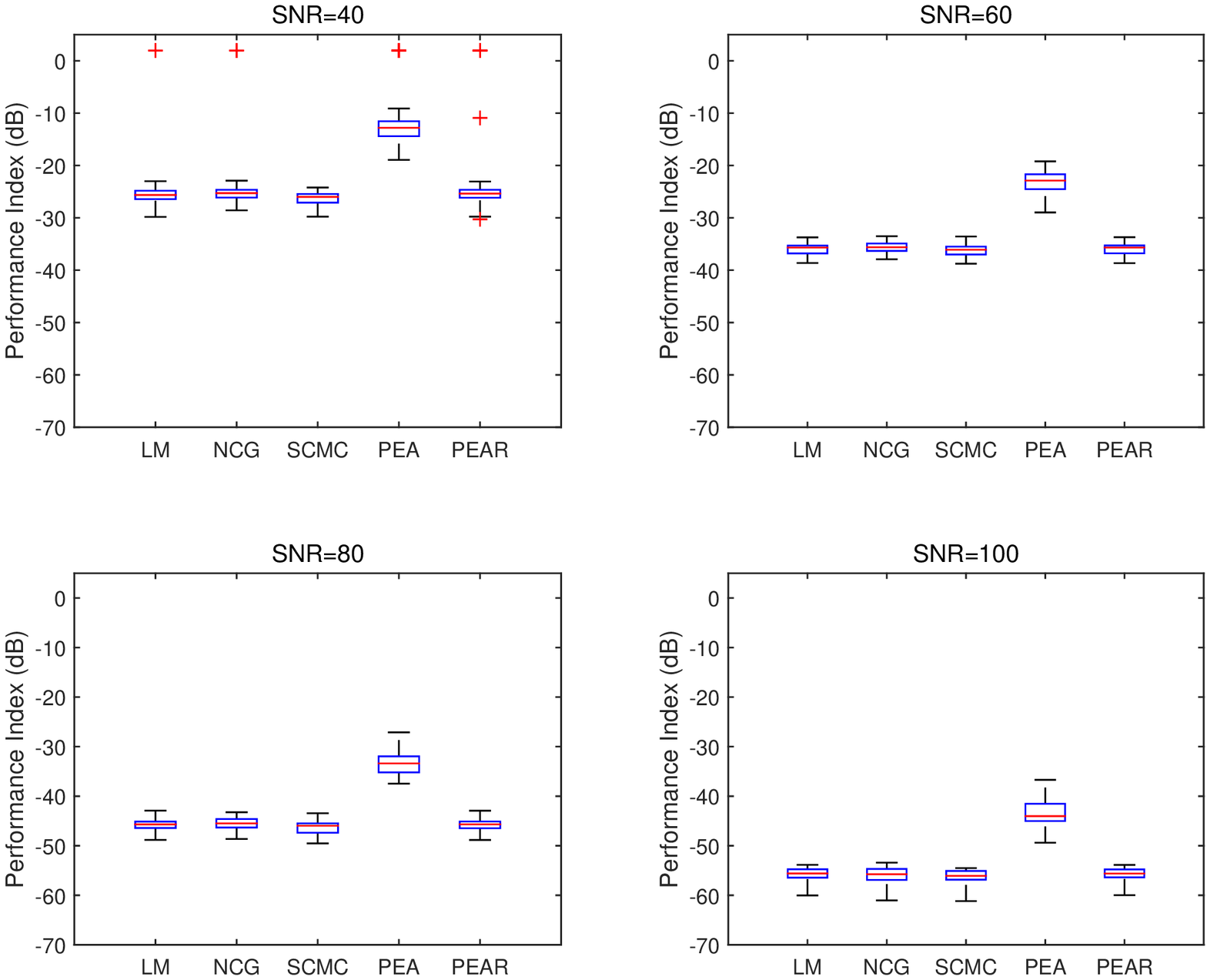}
\caption{Performance indices of five methods with different SNRs for P2}\label{Fig_234_SNR}
\end{figure}

We can see from Figure \ref{Fig_333_SNR} and Figure \ref{Fig_234_SNR}
that for both cases, when SNR equals  40, 60, 80 or 100,
the performance indices produced by JBD-LM, JBD-NCG, SCMC and PEAR are almost the same;
the performance indices produced by PEA are larger than those of other four methods,
which indicates that the diagonalizers produced by the first two stages of PEAR indeed suffer from low  quality,
and the refinement stage of PEAR is effective.
For all methods, the performance indices decrease as the SNR increases.

\textbf{Experiment 3.}
We generate the data with the parameters in P3 and P4, respectively. 
50 independent trials are performed for each matrix set.
The average CPU time of two algebraic methods -- SCMC and PEAR 
are displayed in Figures \ref{Fig_333_Num}.
From the left figure we can see that the CPU time of the two methods increase almost linearly with increased matrix number.
And from the right figure we can see that the CPU time of SCMC increases dramatically as matrix size increases,
meanwhile, the CPU time of PEAR  increases much slower.

\begin{figure}[!ht]
\centering
\includegraphics[width=0.49\textwidth]{./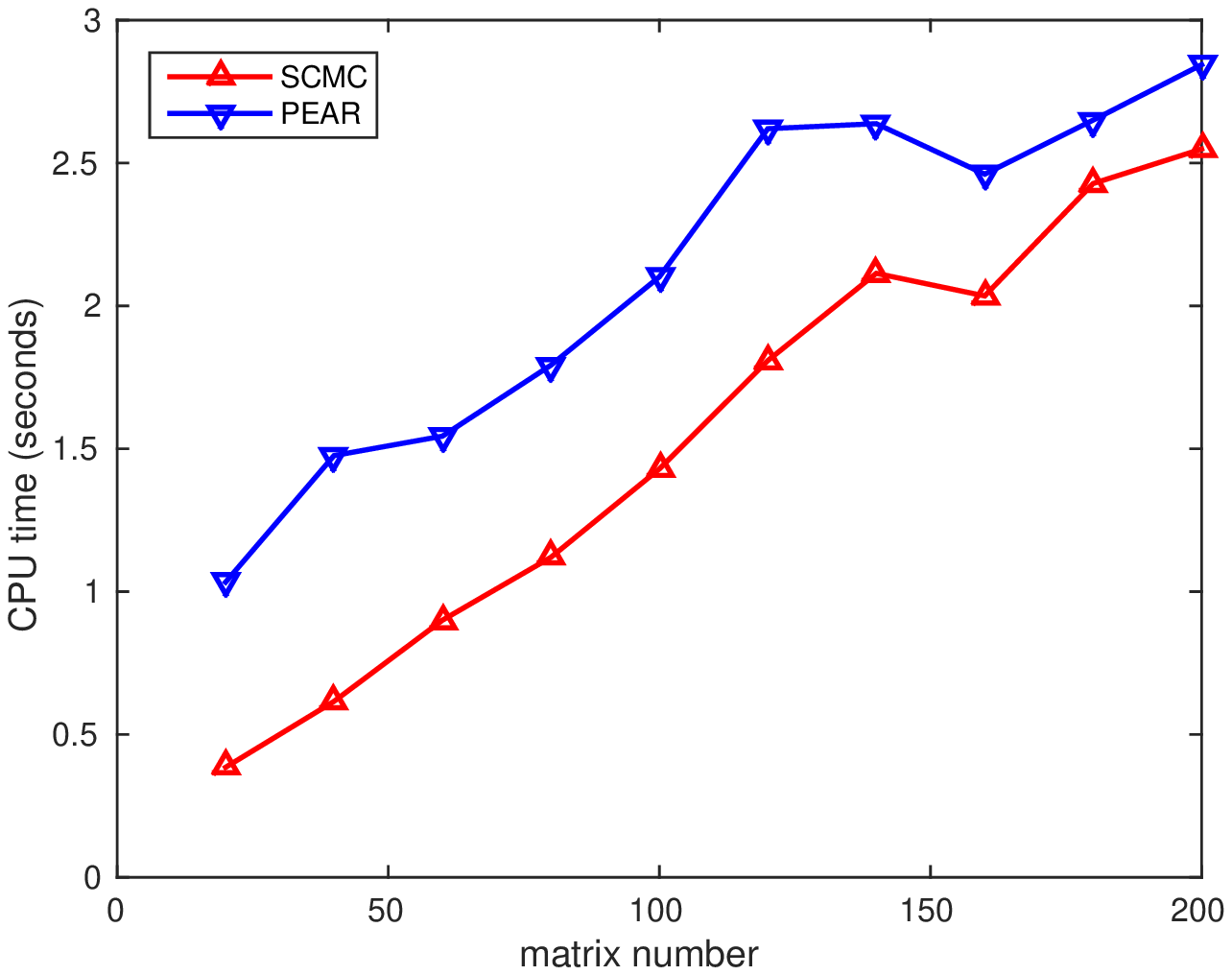}
\includegraphics[width=0.49\textwidth]{./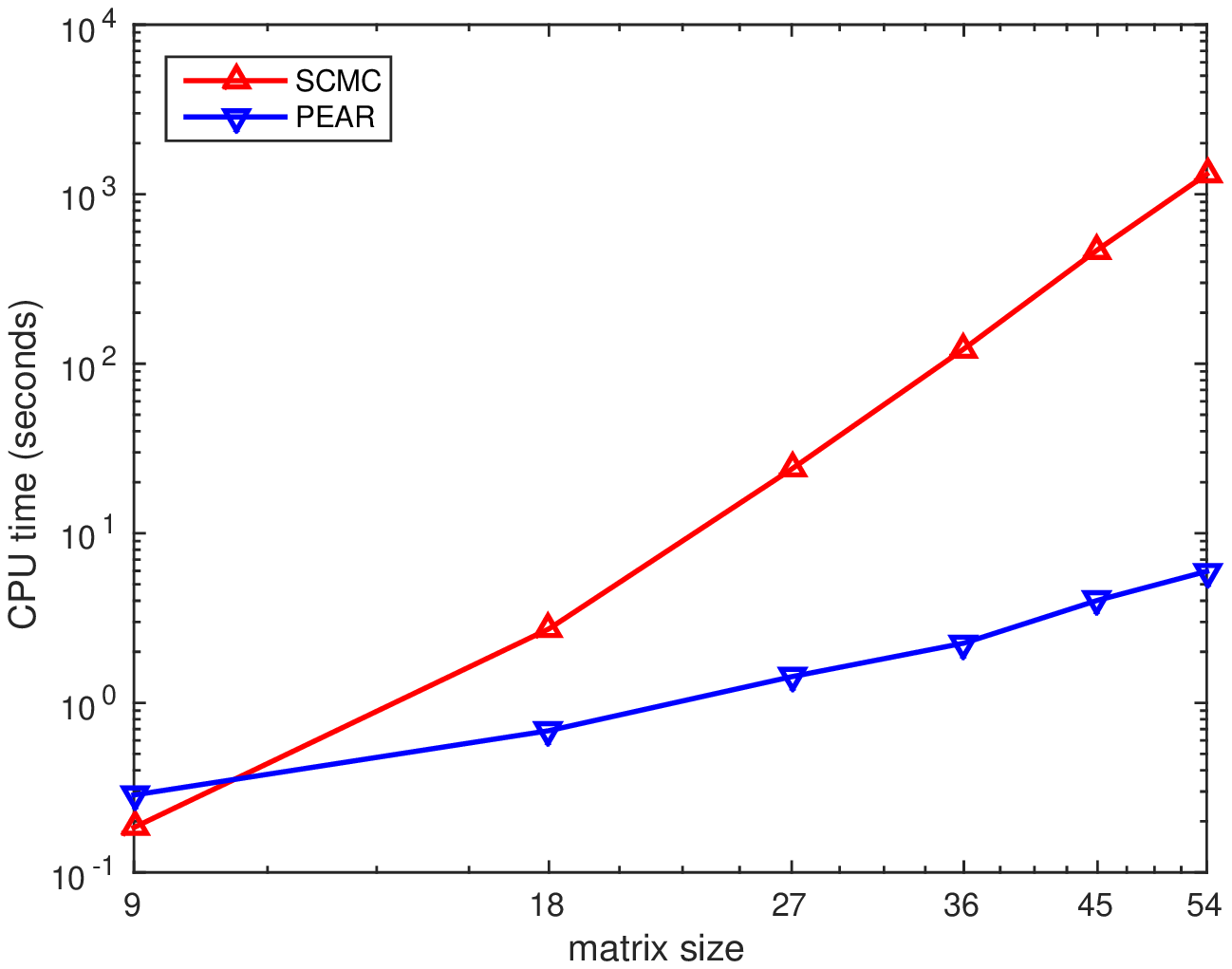}
\caption{(left): CPU time of two methods with different matrix numbers, 
(right): CPU time of two methods with different matrix sizes}\label{Fig_333_Num}
\end{figure}

%

%

\subsection{Separation of convolutive mixtures of source}\label{eg2}
We consider example 4.2 in \cite{cai2017algebraic}, where a real diagonalizer is required. 
All settings are kept the same.
In Figure \ref{Fig_cor}, for different
SNRs, we plot the correlations between the source signals and the extracted signals
obtained from computed solutions by JBD-LM, JBD-NCG, SCMC and PEAR, respectively. 
All displayed results have been averaged over 50 independent trials.

\begin{figure}[!ht]
  \centering
  \includegraphics[width=0.49\textwidth]{./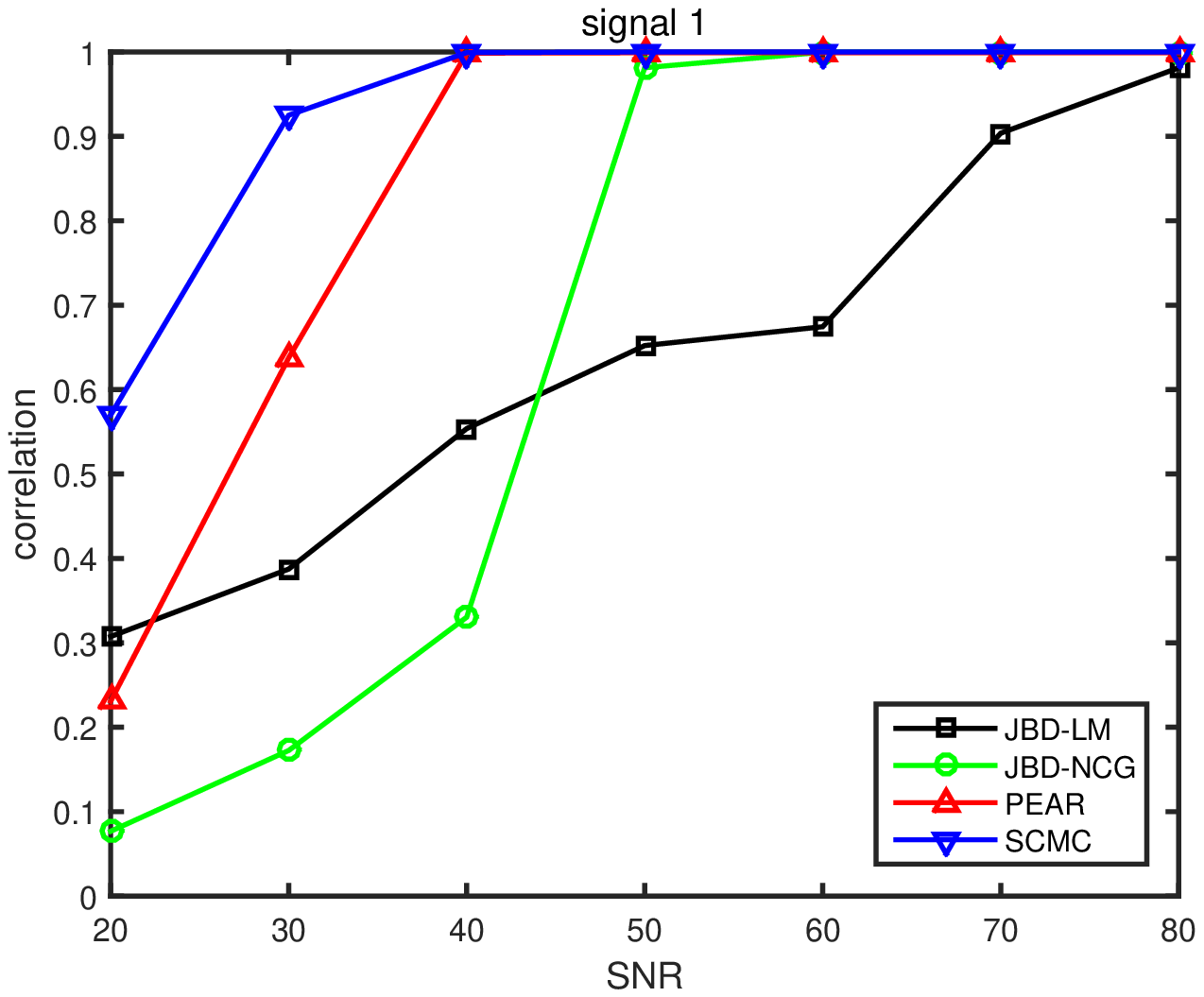}
  \includegraphics[width=0.49\textwidth]{./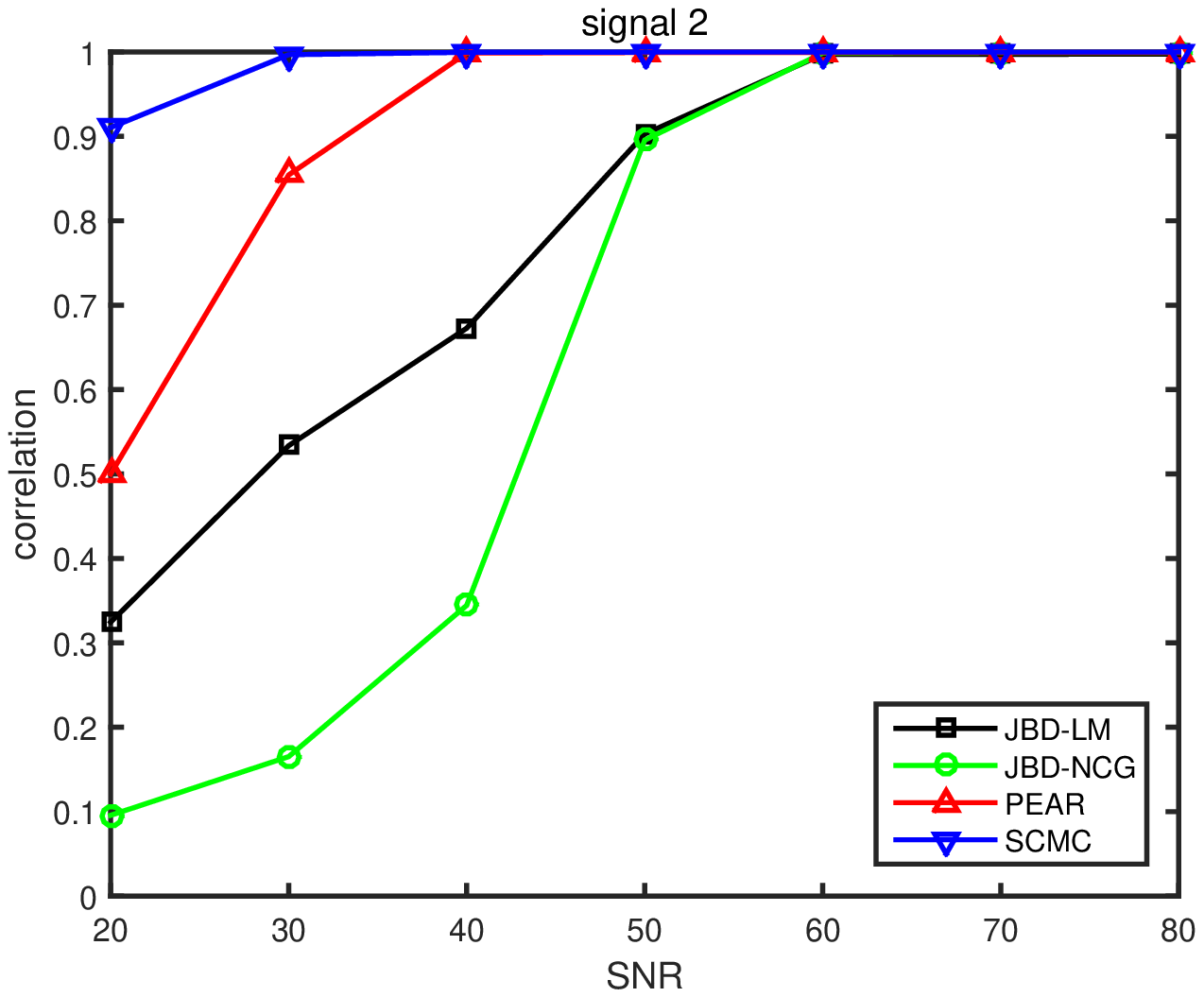}\\
  \includegraphics[width=0.49\textwidth]{./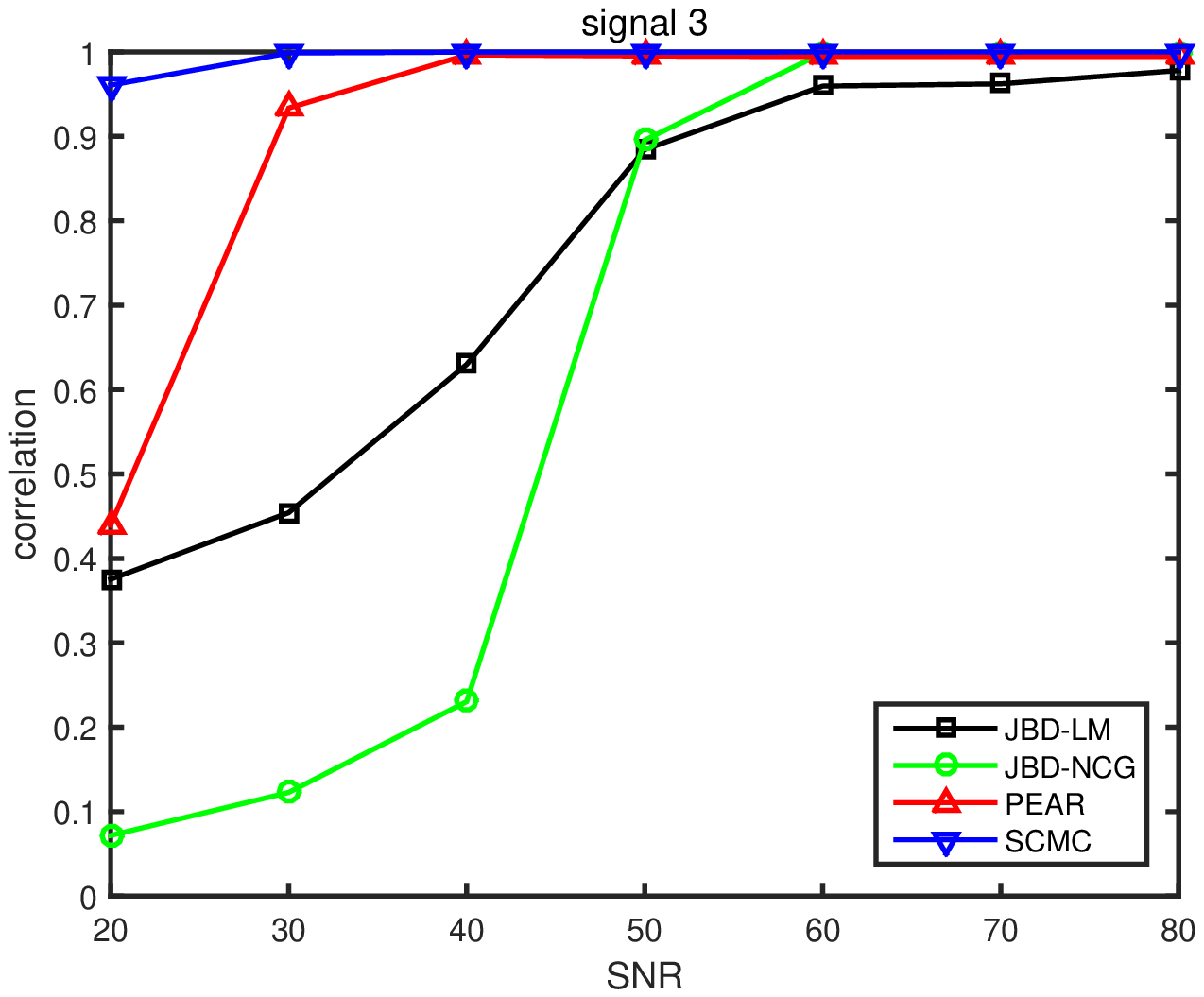}
  \caption{\text{Correlation between recovered signals and source signals.}}\label{Fig_cor}
  \end{figure}

We can see from Figure~\ref{Fig_cor} that when SNR is larger than 60, 
the recovered signals obtained from all four methods are all good approximations of the source signals;
when SNR is less than 60, SCMC is the best, PEAR is the second best.
The reason why SCMC is better than PEAR in this example is that
PEAR fails to find the correct partition in Stage 2 when SNR is small,
meanwhile, with a large tolerance for SCMC, SCMC is able to find a consistent partition with the correct one.
In Stage 2 of PEAR, if we use some normalized Laplacian to find the partition, 
the numerical results of PEAR can be improved.

\section{Conclusion}\label{sec:conclusion}

In this paper, we show how the GJBD problem of a matrix set is related to a matrix polynomial eigenvalue problem.
Theoretically, under mild conditions, we show that 
(a) the GJBD problem of $\{A_i\}_{i=0}^p$ can be solved by $n$ linearly independent eigenvectors of 
the matrix polynomial $P(\lambda)=\sum_{i=0}^p \lambda^i A_i$;
(b) all solutions to the GJBD problem are equivalent;
(c) a sub-optimal solution of the approximate GJBD problem can also be given by $n$ linearly independent eigenvectors.
Algorithmically, we proposed a three-stage method -- PEAR to solve the GJBD problem.
Numerical experiments show that PEAR is effective and efficient. 

Finally, it is worth mentioning here that the GJBD problem discussed in the paper, compared with the BTD of tensors,
is limited in several aspects \cite{cai2017algebraic}: the matrices are square rather than general nonsquare ones; 
the matrices are factorized via a congruence transformation rather than a general one, etc.
Is it possible to use the matrix polynomial approach in this paper to compute a blind BTD (BTD without knowing the number of terms and the size of each term) of tensors?
We will try to answer this question in our further work.

\end{document}